\newtheorem{tm}{Theorem}[section]
\newtheorem{re}[tm]{Remark}
\newtheorem{df}[tm]{Definition}
\newtheorem{pr}[tm]{Proposition}
\newcommand{\subscripts}[3]{%
  \@mathmeasure\z@\displaystyle{#2}%
  \global\setbox\@ne\vbox to\ht\z@{}\dp\@ne\dp\z@
  \setbox\tw@\box\@ne
  \@mathmeasure4\displaystyle{\copy\tw@_{#1}}%
  \@mathmeasure6\displaystyle{{#2}_{#3}}%
  \dimen@-\wd6 \advance\dimen@\wd4 \advance\dimen@\wd\z@
  \hbox to\dimen@{}\mathop{\kern-\dimen@\box4\box6}%
}
\newcommand{\R}{\mathbb{R}}
\newcommand{\N}{\mathbb{N}}
\newcommand{\dd}{\mathrm{d}}
\newcommand{\ve}{\varepsilon}
\newcommand{\dis}{\displaystyle}
\newcommand{\Var}{\mathrm{Var}}
\newcommand{\nn}{\nonumber}
\newcommand{\E}{\mathbb{E}}
\newcommand{\X}{\mathbf{X}}
\begin{document}
\title[Limit theorems for iterates of the Sz\'asz--Mirakyan operator]
{
{ Limit theorems for iterates of the Sz\'asz--Mirakyan operator in probabilistic view}
}
\author[J. Akahori]{Jir\^o Akahori}
\author[R. Namba]{Ryuya Namba}
\author[S. Semba]{Shunsuke Semba}

\address[J. Akahori]{Department of Mathematical Sciences,
College of Science and Engineering,
Ritsumeikan University, 1-1-1, Noji-higashi, Kusatsu, 525-8577, Japan}
\email{{\tt{akahori@se.ritsumei.ac.jp}}}

\address[R. Namba]{Department of Mathematics,
Faculty of Education,
Shizuoka University, 836, Ohya, Suruga-ku, 
Shizuoka, 525-8577, Japan}
\email{{\tt{namba.ryuya@shizuoka.ac.jp}}}

\address[S. Semba]{Graduate School of Science and Engineering,
Ritsumeikan University, 1-1-1, 
Noji-higashi, Kusatsu, 525-8577, Japan}
\email{{\tt{12s0523sun@gmail.com}}}

\subjclass[2010]{Primary 60J10, 
60J60
; Secondary  
41A25, 
41A36, 
47D03. 
}
\keywords{Bernstein operator;
Sz\'asz--Mirakyan operator; 
Approximation of continuous functions; diffusion process.}

\maketitle 
%
%
\begin{abstract}
The Sz\'asz--Mirakyan operator is known as 
a positive linear operator which uniformly approximates 
a certain class of continuous functions on the half line. 
The purpose of the present paper is to find out 
limiting behaviors of the iterates of the Sz\'asz--Mirakyan operator 
in a probabilistic point of view. 
We show that the iterates of the Sz\'asz--Mirakyan operator
uniformly converges to a continuous semigroup generated by
a second order degenerate differential operator. 
A probabilistic interpretation of the convergence 
in terms of a discrete Markov chain constructed from the iterates 
and a limiting diffusion process on the half line is 
captured as well. 

\end{abstract}


\section{{\bf Introduction and main results}}

For $ n\in \N $, 
we define a linear operator 
$ B_n $ acting on $ C([0, 1]) $, 
the Banach space of all continuous functions on $ [0, 1] $, by 
    \begin{equation}\label{Bernstein op}
        B_n f(x) := \sum_{i=0}^n 
        \binom{n}{k} x^k (1-x)^{n-k} f\left(\frac{k}{n}\right),
        \qquad f \in C([0, 1]), \, x \in [0, 1].
    \end{equation}
The operator $ B_n $ was originally introduced by Bernstein
in \cite{Bernstein}. 
Therefore, it is called the {\it Bernstein operator}
and $ B_n f $ the {\it Bernstein polynomial} after his name. 
The Bernstein operator is famous for its 
remarkable property that  
$ B_n f $ approximates  
continuous functions uniformly in $ x \in [0, 1] $.

\begin{pr}[cf.~\cite{Bernstein}]
\label{Prop:Bernstein's thm}
    We have
    $$
        \lim_{n \to \infty}\max_{x \in [0, 1]}
        |B_n f(x) - f(x)|=0, 
        \qquad f \in C([0, 1]). 
    $$
\end{pr}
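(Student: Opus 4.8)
The plan is to give the classical probabilistic proof, which is in the spirit of the present paper. Fix $x \in [0,1]$ and let $(\X_j)_{j \ge 1}$ be a sequence of independent Bernoulli random variables with $\mathbb{P}(\X_j = 1) = x$ and $\mathbb{P}(\X_j = 0) = 1-x$, defined on some probability space. Put $S_n := \X_1 + \cdots + \X_n$, so that $S_n$ follows the binomial distribution $\mathrm{Bin}(n,x)$ and hence
$$
\mathbb{P}\Big( \frac{S_n}{n} = \frac{k}{n} \Big) = \binom{n}{k} x^k (1-x)^{n-k}, \qquad k = 0, 1, \ldots, n.
$$
Comparing this with \eqref{Bernstein op} yields the representation $B_n f(x) = \E\big[ f(S_n/n) \big]$. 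This identity is the crux of the argument: it converts the analytic assertion into a statement about the convergence of the random variable $f(S_n/n)$, to which the weak law of large numbers applies.

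Next I would use that $f$, being continuous on the compact interval $[0,1]$, is both bounded, say $\|f\|_\infty := \max_{y \in [0,1]}|f(y)| < \infty$, and uniformly continuous. Given $\ve > 0$, choose $\delta > 0$ such that $|f(y) - f(x)| < \ve$ whenever $|y - x| < \delta$, with $\delta$ independent of $x, y \in [0,1]$. Writing $f(x) = \E[f(x)]$ and splitting the expectation according to whether $|S_n/n - x| < \delta$ or not, we obtain
$$
|B_n f(x) - f(x)| \le \E\Big[ \big| f(S_n/n) - f(x) \big| \Big] \le \ve + 2\|f\|_\infty \, \mathbb{P}\Big( \Big| \frac{S_n}{n} - x \Big| \ge \delta \Big).
$$

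Finally I would bound the remaining probability by Chebyshev's inequality together with the variance identity $\Var(S_n/n) = x(1-x)/n \le 1/(4n)$, which holds uniformly in $x \in [0,1]$; this gives $\mathbb{P}(|S_n/n - x| \ge \delta) \le 1/(4n\delta^2)$. Therefore
$$
\max_{x \in [0,1]} |B_n f(x) - f(x)| \le \ve + \frac{\|f\|_\infty}{2 n \delta^2},
$$
and letting $n \to \infty$ and then $\ve \downarrow 0$ finishes the proof. The only subtlety is that every estimate must be uniform in $x$, but this is ensured by the uniform continuity of $f$ and by the uniform variance bound $x(1-x) \le 1/4$; so in fact there is no serious obstacle, and the proof is essentially a packaging of the law of large numbers with a quantitative (Chebyshev) rate.
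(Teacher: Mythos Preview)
Your argument is correct and follows exactly the approach the paper indicates: represent $B_nf(x)$ as $\E[f(S_n/n)]$ for binomial $S_n$, split the expectation according to $\{|S_n/n-x|\ge\delta\}$ and its complement, and control the two pieces respectively by Chebyshev's inequality with the uniform variance bound $x(1-x)\le 1/4$ and by the uniform continuity of $f$. The paper does not spell out the proof but refers to it as an immediate consequence of the weak law of large numbers (cf.~\cite[Example~5.15]{Klenke}), which is precisely what you have written out.
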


\noindent
It turns out that Proposition \ref{Prop:Bernstein's thm} 
provides a constructive proof 
of the celebrated {\it Weierstrass approximation theorem} in a probabilistic way. 
Indeed, the proof is an immediate consequence of the weak law of large numbers
and is referred to as an exercise of interest 
in many textbooks of probability theory (cf.~\cite[Example 5.15]{Klenke}). 
We also refer to e.g., \cite{Bustamante} for basic facts related to the 
Bernstein operator.

In focusing on iterates $ B_n^k $ of  $ B_n $ itself $k$ times, 
it is interesting to reveal the limiting behaviors of $ B_n^k f $
as $ n \to \infty $ and/or $ k \to \infty $. 
Kelisky and Rivlin  first studied such kind of limit theorems for 
$ B_n^k $ in \cite{KR} and obtained 
    $$
    \lim_{k \to \infty}\max_{x \in [0, 1]}
    \big|B_n^k f(x) - \{f(0)+(f(1)-f(0))x\}\big|=0, 
    \qquad n \in \mathbb{N}, \,\, f \in C([0, 1]),
    $$
which means that $ B_n^k f $ converges uniformly to 
a linear function which interpolates between $ f(0) $ and $ f(1) $
as $ k \to \infty $ with fixed $ n $. 
We also refer to \cite{Jachymski} for Kelisky--Rivlin type theorems 
for various kinds of positive linear operators. 

Next, let us consider the case where 
both $ k $ and $ n $ tend to infinity 
and the ratio $ k/n $ tends to some constant $t >0$. 
We fix $ n \in \mathbb{N} $.
For $ x \in [0, 1] $, let $ S_n=S_n(x) $
denote a random variable given by 
    \[
    \mathbb{P}(S_n(x)=k)=
    \binom{n}{k} x^k (1-x)^{n-k}, \qquad k=0, 1, 2, \dots, n,
    \]
and $ G_n=G_n(x) $ be a random function 
on $ [0, 1] $ defined by 
    $$
    G_n(x):=\frac{1}{n}S_n(x), \qquad x \in [0, 1].
    $$
Moreover, let $ \{G_n^k\}_{k=1}^\infty $  be a sequence of 
independent copies of $ G_n $.
For $ k \in \N, $ we define a random function $ H_n^k : [0, 1] \to \R $ by 
    $H_n^k:=G_n^k \circ G_n^{k-1} \circ \cdots \circ G_n^1.$
Then, it follows from the definition 
of $B_n$ that 
    \begin{equation}
    \label{Bernstein-Markov chain}
    \mathbb{E}[f(H_n^k(x))]
    =B_n^kf(x), \qquad x \in [0, 1], \, k=1, 2, \dots. 
    \end{equation}
Then, for every $ x \in [0, 1], $
we easily observe that the sequence 
$ \{H_n^k(x)\}_{k=1}^\infty $ is a time-homogeneous Markov chain 
with values in $ \{i/n \, | \, i=0, 1, 2, \dots, n\} $
whose one-step transition probability is given by 
    $$
    \mathbb{P}\left(
    H_n^{k+1}(x)=\frac{j}{n} \, \Bigg| \, 
    H_n^k(x)=\frac{i}{n}\right)
    =\binom{n}{j}\left(\frac{i}{n}\right)^j\left(1-\frac{i}{n}\right)^{n-j}, 
    \qquad i, j=0, 1, 2, \dots, n.
    $$
This describes the simplest stochastic model in mathematical biology, 
called the {\it Wright--Fisher model} for population genetics. 
We refer to e.g., \cite[Chapter 10]{EK} 
for more details in a probabilistic point of view. 
Besides, Konstantopoulos, Yuan and Zazanis  showed in \cite{KYZ} that 
the random curve defined by 
$ t \longmapsto H_n^{[nt]}(x), \, t \ge 0, $
weakly converges to a certain diffusion process with values in $ [0, 1] $. 
The precise statement is the following. 

\begin{pr}[cf.~{\cite[Theorem 3]{KYZ}}]
\label{Prop:KYZ-weak-conv}
   For $ x \in [0, 1], $
   let $ (\X_t(x))_{t \ge 0} $ be the diffusion process
   which solves the stochastic differential equation
   $$
   \dd \X_t(x)=\sqrt{\X_t(x)(1-\X_t(x))} \, \dd W_t, 
   \qquad \X_0(x)=x,
   $$
   where $ (W_t)_{t \ge 0} $ is a one-dimensional 
   standard Brownian motion. 
   Then, for every $ f \in C([0, 1]) $ and $ t \ge 0, $
   we have 
   $$
   \lim_{n \to \infty}\max_{x \in [0, 1]}
   \big|B_n^{[nt]}f(x) - \mathbb{E}[f(\X_t(x)]\big|=0.
   $$
\end{pr}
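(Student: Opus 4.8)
I would prove this as a convergence of discrete-parameter semigroups to the Feller semigroup of the diffusion $(\X_t(x))_{t\ge0}$, via a Trotter--Kato type approximation theorem. Write $\mathcal{L}g(x):=\tfrac12 x(1-x)g''(x)$ and $T_tg(x):=\E[g(\X_t(x))]$. Since the coefficient $x\mapsto\sqrt{x(1-x)}$ is $\tfrac12$-Hölder continuous on $[0,1]$ and vanishes at the endpoints, the Yamada--Watanabe criterion yields a pathwise (hence weakly) unique $[0,1]$-valued solution with $0$ and $1$ absorbing, so that $\{T_t\}_{t\ge0}$ is a Feller semigroup on $C([0,1])$ whose generator $(\mathcal{L},\mathcal{D}(\mathcal{L}))$ contains $C^2([0,1])$, in particular the space $\mathcal{P}$ of polynomials. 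From It\^o's formula the moments satisfy the closed triangular system $\tfrac{\dd}{\dd t}\E[\X_t(x)^m]=\binom{m}{2}\big(\E[\X_t(x)^{m-1}]-\E[\X_t(x)^m]\big)$, so $T_t$ maps $\mathcal{P}$ into itself; together with Weierstrass' theorem this shows that $\mathcal{P}$ is a core for $\mathcal{L}$. Finally, by definition each $B_n$, and hence each $B_n^{[nt]}$, is a positive unital, thus contractive, operator on $C([0,1])$.

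\noindent\textbf{Convergence of the infinitesimal generators.} First I would show that $n(B_n-I)\to\mathcal{L}$ on $\mathcal{P}$ in the sup norm. Using $B_ng(x)=\E[g(S_n(x)/n)]$ with $S_n(x)\sim\mathrm{Binomial}(n,x)$ as above, a third-order Taylor expansion of $g\in C^3([0,1])$ about $x$ gives
\[
B_ng(x)-g(x)=g'(x)\,\E\!\Big[\tfrac{S_n(x)}{n}-x\Big]+\tfrac12 g''(x)\,\E\!\Big[\big(\tfrac{S_n(x)}{n}-x\big)^2\Big]+R_n(x),
\]
with $|R_n(x)|\le\tfrac16\|g'''\|_\infty\,\E\big|\tfrac{S_n(x)}{n}-x\big|^3$. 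Since $\E[S_n(x)]=nx$ and $\Var(S_n(x))=nx(1-x)$, the first expectation vanishes and the second equals $x(1-x)/n$; and since the fourth central moment of $S_n(x)$ is $O(n^2)$ uniformly in $x$, one has $\E|S_n(x)/n-x|^3\le(\E(S_n(x)/n-x)^4)^{3/4}=O(n^{-3/2})$ uniformly in $x\in[0,1]$. Multiplying by $n$ yields
\[
\lim_{n\to\infty}\max_{x\in[0,1]}\big|\,n(B_ng(x)-g(x))-\mathcal{L}g(x)\,\big|=0,\qquad g\in C^3([0,1]),
\]
and in particular for $g\in\mathcal{P}$.

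\noindent\textbf{Passing to the iterates.} Then I would invoke the Trotter--Kato approximation theorem for discrete-parameter semigroups (cf.\ \cite[Theorem~1.6.5]{EK}): since the $B_n$ are contractions, $n(B_n-I)g\to\mathcal{L}g$ for $g$ in the core $\mathcal{P}$, and $[nt]/n\to t$, it follows that $\max_{x\in[0,1]}|B_n^{[nt]}g(x)-T_tg(x)|\to0$ for every $g\in\mathcal{P}$ and $t\ge0$, uniformly for $t$ in compact sets. A standard $3\ve$ argument, using $\|B_n^{[nt]}\|\le1=\|T_t\|$ together with the density of $\mathcal{P}$ in $C([0,1])$, then upgrades this to all $f\in C([0,1])$, which is the assertion of Proposition~\ref{Prop:KYZ-weak-conv}.

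\noindent\textbf{Main obstacle.} The Taylor and moment estimates are routine; the delicate part is the functional-analytic description of the limit object --- namely that $(\mathcal{L},\mathcal{D}(\mathcal{L}))$ generates the Feller semigroup $\{T_t\}$ and that $\mathcal{P}$ is a core for it --- which rests on the classical theory of the Wright--Fisher (Jacobi) diffusion, whose generator is degenerate and whose coefficient is only $\tfrac12$-Hölder with absorbing boundary points, and on checking that all estimates are uniform in $x$ up to the degenerate boundary. As a shortcut one may instead argue entirely within finite dimensions: $B_n$ and $\mathcal{L}$ leave the finite-dimensional space $\mathcal{P}_m$ of polynomials of degree $\le m$ invariant, $n(B_n-I)|_{\mathcal{P}_m}\to\mathcal{L}|_{\mathcal{P}_m}$, and hence $B_n^{[nt]}|_{\mathcal{P}_m}=(I+\tfrac1n\,n(B_n-I))^{[nt]}|_{\mathcal{P}_m}\to e^{t\mathcal{L}|_{\mathcal{P}_m}}=T_t|_{\mathcal{P}_m}$ by elementary matrix analysis (the eigenvalues $\prod_{i=0}^{j-1}(1-i/n)$ of $B_n|_{\mathcal{P}_m}$ raised to the power $[nt]$ converge to $e^{-j(j-1)t/2}$, $j=0,\dots,m$); the same $3\ve$ argument then finishes the proof.
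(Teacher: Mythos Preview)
The paper does not supply its own proof of Proposition~\ref{Prop:KYZ-weak-conv}; the result is quoted from \cite{KYZ}, and the surrounding discussion explicitly notes that \cite{KYZ} proceeds by ``usual techniques in stochastic calculus'', while a functional-analytic proof via Trotter's approximation theorem is also possible. Your proposal is precisely this Trotter--Kato alternative, and it is correct: the Voronovskaya-type estimate $n(B_n-I)g\to\mathcal{L}g$ uniformly for $g\in C^3([0,1])$, the contractivity of each $B_n$, and the identification of polynomials as a core for $\mathcal{L}$ (via the triangular moment system, which shows $T_t\mathcal{P}_m\subset\mathcal{P}_m$) are exactly the hypotheses Trotter's theorem requires. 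The finite-dimensional shortcut you sketch at the end---reducing to the invariant subspaces $\mathcal{P}_m$ and using the explicit eigenvalues of $B_n$ there---is also valid and is close in spirit to the original Kelisky--Rivlin analysis \cite{KR}. In short, your argument is sound and coincides with the alternative route the paper itself points to; the paper then applies essentially the same Trotter template (Theorems~\ref{Thm:Voronovskaya-Szasz-Mirakyan} and \ref{Thm:semigroup-conv-SM}) to the Sz\'asz--Mirakyan case, combining it with a Stroock--Varadhan weak-convergence argument in Theorem~\ref{Thm:SM-diffusion} to identify the limiting semigroup with a diffusion.
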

\noindent
Since the Markov chain $ \{H_n^{k}(x)\}_{k=1}^\infty $ is 
absorbing and it reaches the states 0 or 1 within finite times
with probability one, the limiting process $ (\X_t(x))_{t \ge 0} $
is also absorbed at the boundary of $ [0, 1] $ (cf.~\cite[Lemma 1]{KYZ}). 
The stochastic process $ (\X_t(x))_{t \ge 0} $
is called the {\it Wright--Fisher diffusion}, which is the 
most fundamental model in population genetics  
used so as to approximate the discrete Markov chain 
$ \{H_n^k(x)\}_{k=1}^\infty $. 
See e.g., \cite{Feller} for an early work on this topic. 

We note that Proposition \ref{Prop:KYZ-weak-conv} can be also 
proved by employing functional analytic techniques 
such as the celebrated {\it Trotter's approximation theorem} 
(cf.~\cite{Trotter}, see also \cite{Kurtz}). 
On the other hand, 
in \cite{KYZ}, the authors apply usual techniques 
in stochastic calculus 
in order to establish limit theorems
for the iterates of the Bernstein operator including 
Proposition \ref{Prop:KYZ-weak-conv}. 
Hence, they can reveal remarkable relations between 
the limiting behaviors of iterates of the Bernstein operator
and the stochastic phenomena behind the operator approximating 
continuous functions, 
which motivates a further study of interest.

So far, a large amount of generalizations of the Bernstein operator 
have been investigated extensively in various settings. 
There seems to be several directions 
to generalize the classical Bernstein operator \eqref{Bernstein op} 
in view of approximations of continuous functions. 
Among them, it should be most natural to consider generalizations
of \eqref{Bernstein op} to the infinite interval cases such as 
$[0, \infty)$ and $\R=(-\infty, \infty)$. 
We refer to e.g., \cite{AC},
for a generalization of \eqref{Bernstein op} to the case of $[0, \infty)$.

Let $C([0, \infty))$ be the linear space consisting of 
continuous functions defined on $ [0, \infty) $.
The present paper focuses on the so-called 
{\it Sz\'asz--Mirakyan operator} on $ [0, \infty) $,
which was originally introduced by
Mirakyan \cite{Mirakyan} and Sz\'asz \cite{Szasz} 
independently as a generalization of  \eqref{Bernstein op}
to the case of $ [0, \infty) $.

\begin{df}[Sz\'asz--Mirakyan operator]
For $ n \in \N $, we define the Sz\'asz--Mirakyan operator 
$ \mathcal{P}_n $ acting on $ C([0, \infty)) $ by
    \begin{equation}\label{Szasz-Mirakyan}
        \mathcal{P}_nf(x)
        :=\sum_{k=0}^\infty e^{-nx}\frac{(nx)^k}{k!}
        f\left(\frac{k}{n}\right), 
        \qquad f \in C([0, \infty)), \, x \in [0, \infty).
    \end{equation}
\end{df}

\noindent
Compared with the case of uniform approximations 
of continuous functions on $ [0, 1] $,
it is harder to treat the infinite interval cases 
since continuous functions on $ [0, \infty) $
such as polynomial functions are not always bounded. 
Therefore, we need to restrict ourselves to some function spaces weighted
by bounded functions so as to make the approximation 
of continuous functions by $ \mathcal{P}_n $
go well (see Proposition \ref{Prop:conv-SM-weighted}).

Before stating our main results, we need to fix some notations 
on several function spaces on $[0, \infty)$. 
We denote by $ C_b([0, \infty)) $ 
    the linear space of all bounded continuous functions on $ [0, \infty) $, 
    which is a Banach space
    with respect to the usual norm 
    $ \|f\|_\infty=\sup_{x \in [0, \infty)}|f(x)| $. 
    Let $ C_\infty([0, \infty)) $ be
    the linear space of all continuous functions 
    vanishing at infinity, 
    which is also a Banach subspace of $ C_b([0, \infty)) $. 
    For a weight function $ w \in C([0, \infty)) $
    satisfying $ w(x)>0, \, x \in [0, \infty) $, 
    we put 
      $C_\infty^w([0, \infty))
       :=\{f \in C([0, \infty)) \, | \, 
       wf \in C_\infty([0, \infty))\}.$
    It also becomes a Banach space when we endow it with the weighted norm 
    \[
    \|f\|_{\infty, w}:=\sup_{x \in [0, \infty)}
    |w(x)f(x)|, \qquad f \in C_\infty^w([0, \infty)). 
    \]
    For $1 \le r \le \infty$, 
    we denote by $ C^r([0, \infty)) $ 
    the linear space of all functions on $ [0, \infty) $
    having a continuous $r$-th derivative. 
    We also put
    $C_\infty^r([0, \infty))
    :=C_\infty([0, \infty)) \cap C^r([0, \infty)).$
    By $ \mathrm{Lip}([0, \infty)) $, we mean 
    the set of all Lipschitz continuous functions on  
    $ [0, \infty) $. We also put 
    \[
    \mathrm{Lip}(f):=\sup_{x, y \in [0, \infty), \, x \neq y}
    \frac{|f(x)-f(y)|}{|x-y|}, \qquad f \in \mathrm{Lip}([0, \infty)). 
    \]

For $ \alpha \ge 1 $, we define 
    \begin{equation}\label{weight}
    w_\alpha(x):=\frac{1}{1+x^\alpha}, \qquad 
    x \in [0, \infty).
    \end{equation}
The first main result of the present paper is the 
{\it Voronovskya-type theorem} for the sequence 
$ \{n(\mathcal{P}_n-I)\}_{n=1}^\infty $
on the function space weighted by $ w_\alpha $, 
where $I$ stands for the identity operator. 

\begin{tm}
\label{Thm:Voronovskaya-Szasz-Mirakyan}
Let $ \alpha \ge 1 $.
For every $ f \in C_\infty^{2}([0, \infty)) $, 
we have 
    \[
    \lim_{n \to \infty}
    \|n(\mathcal{P}_n f-f) - \mathcal{A}f\|_{\infty, w_{\alpha}}=0,
    \]
where $ \mathcal{A} $ is the degenerate differential operator
defined by
    \begin{equation}
        \label{generator-SM}
    \mathcal{A}f(x):=\begin{cases}
    \dfrac{x}{2}f''(x) & \text{if }x>0 \\
    0 & \text{if }x=0
    \end{cases}
    \end{equation}
for $ f \in D(\mathcal{A}) $, where 
the domain $ D(\mathcal{A}) $ of $ \mathcal{A} $
is given by 
    \begin{equation}
        \label{generator-SM-domain}
    D(\mathcal{A})=\Big\{
    f \in C_\infty^{w_\alpha}([0, \infty)) \cap C^2([0, \infty))
    \, \Big| \, \lim_{x \to 0+}xf''(x)
    =\lim_{x \to \infty}w_\alpha(x)(xf''(x))=0\Big\}.
    \end{equation}
Moreover, for $ f \in C^2_\infty([0, \infty)) $ with 
$ f'' \in \mathrm{Lip}([0, \infty)) $ and $ \alpha>3/2 $, 
we have 
    \[
    \|n(\mathcal{P}_n f-f) - \mathcal{A}f\|_{\infty, w_{\alpha}} \le 
    \frac{1}{6\sqrt{n}}M_\alpha\mathrm{Lip}(f''), \qquad n \in \N,
    \]
where 
    \[
    M_\alpha:=\frac{3^{3/4}(2\alpha-3)}{2\alpha}\left(\frac{3}{2\alpha-3}\right)^{3/2\alpha}
    +\frac{4\alpha-3}{4\alpha}\left(\frac{3}{4\alpha-3}\right)^{3/4\alpha}.
    \]
\end{tm}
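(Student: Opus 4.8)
\smallskip
\noindent\textbf{Proof strategy.}
The plan is to use the probabilistic representation $\mathcal{P}_nf(x)=\E[f(Y_n(x))]$, where $Y_n(x):=N_n(x)/n$ and $N_n(x)$ is a Poisson random variable with mean $nx$. Since $N_n(0)\equiv 0$, both assertions hold trivially at $x=0$ (note $\mathcal{A}f(0)=0$ as well), so fix $x>0$. Taylor's formula with integral remainder gives $f(Y_n(x))=f(x)+f'(x)(Y_n(x)-x)+\tfrac12 f''(x)(Y_n(x)-x)^2+R_x(Y_n(x))$, where $R_x(y):=\int_x^y(y-t)(f''(t)-f''(x))\,\dd t$. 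Taking expectations and using $\E[Y_n(x)]=x$ and $\E[(Y_n(x)-x)^2]=\Var(N_n(x))/n^2=x/n$, the first-order term vanishes and the second-order term produces exactly $\tfrac1n\mathcal{A}f(x)$, so that
\[
n\big(\mathcal{P}_nf(x)-f(x)\big)-\mathcal{A}f(x)=n\,\E[R_x(Y_n(x))].
\]
Thus everything reduces to estimating $n\,\E[|R_x(Y_n(x))|]$ against the weight $w_\alpha(x)$. The only further moments needed are $\E[(Y_n(x)-x)^4]=(nx+3n^2x^2)/n^4=x/n^3+3x^2/n^2$ (the fourth central moment of $\mathrm{Poisson}(\lambda)$ being $\lambda+3\lambda^2$) and the Lyapunov bound $\E[|Y_n(x)-x|^3]\le(\E[(Y_n(x)-x)^4])^{3/4}$.

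For the quantitative estimate ($\alpha>3/2$, $\mathrm{Lip}(f'')<\infty$), I would use $|f''(t)-f''(x)|\le\mathrm{Lip}(f'')|t-x|$ and $\int_x^y(y-t)|t-x|\,\dd t=|y-x|^3/6$ to get $|R_x(y)|\le\tfrac16\mathrm{Lip}(f'')|y-x|^3$, hence
\[
\big|n(\mathcal{P}_nf(x)-f(x))-\mathcal{A}f(x)\big|\le\tfrac{n}{6}\mathrm{Lip}(f'')\,\E[|Y_n(x)-x|^3]\le\tfrac{n}{6}\mathrm{Lip}(f'')\big(x/n^3+3x^2/n^2\big)^{3/4}.
\]
Splitting the last factor by subadditivity $(a+b)^{3/4}\le a^{3/4}+b^{3/4}$ and using $n\ge1$, this is at most $\tfrac{1}{6\sqrt n}\mathrm{Lip}(f'')(x^{3/4}+3^{3/4}x^{3/2})$; multiplying by $w_\alpha(x)=1/(1+x^\alpha)$ and taking the supremum over $x>0$ reduces the problem to the one-variable optimisation $\sup_{x>0}x^{p}/(1+x^\alpha)$ for $p=3/4$ and $p=3/2$. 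An elementary computation shows the maximiser is $x^\alpha=p/(\alpha-p)$ (which exists precisely when $\alpha>p$, explaining the hypothesis $\alpha>3/2$) and the value equals $\tfrac{\alpha-p}{\alpha}(\tfrac{p}{\alpha-p})^{p/\alpha}$; substituting $p=3/4,\,3/2$ and collecting the constants recovers $M_\alpha$. This portion is purely mechanical.

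For the qualitative statement ($\alpha\ge1$, $f\in C_\infty^2([0,\infty))$ only), $f''$ is merely uniformly continuous with some modulus of continuity $\omega$, and $|R_x(y)|$ is bounded by $\tfrac12(y-x)^2$ times the oscillation of $f''$ between $x$ and $y$. I would split $[0,\infty)$ at a large threshold $K$. On $[0,K]$ the weight is $\le1$, and one argues classically: splitting the expectation at $\{|Y_n(x)-x|\le\delta\}$, the bulk contributes at most $\tfrac12 K\omega(\delta)$ after the factor $n$, while on the complement a Chebyshev estimate with the fourth moment gives a contribution of order $1/n$ (with a constant depending on $K,\delta$); choosing $\delta$ small and then $n$ large handles this range. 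On $[K,\infty)$ one exploits the decay of $f''$ at infinity: on the likely event $\{|Y_n(x)-x|\le x/2\}$ both $x$ and $Y_n(x)$ exceed $x/2$, so $|R_x(Y_n(x))|\le(\sup_{t\ge x/2}|f''(t)|)(Y_n(x)-x)^2$, and after multiplying by $nw_\alpha(x)$ and using $\E[(Y_n(x)-x)^2]=x/n$ and $\sup_{z>0}zw_\alpha(z)<\infty$ (valid for $\alpha\ge1$), the contribution is at most $(\sup_{t\ge K/2}|f''(t)|)\cdot\sup_{z>0}zw_\alpha(z)\to0$ as $K\to\infty$; on the complementary event a Chernoff bound for the Poisson law bounds the probability by $e^{-cnx}$ for some $c>0$, which, together with the crude estimate $|R_x(y)|\le 2\|f\|_\infty+\|f'\|_\infty|y-x|+\tfrac12\|f''\|_\infty(y-x)^2$ and the Cauchy--Schwarz inequality, makes this contribution negligible as $n\to\infty$ uniformly in $x\ge K$. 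Letting first $K\to\infty$ and then $n\to\infty$ finishes the proof. The hard part is exactly this uniform control over the unbounded range: when $\alpha$ is close to $1$ the quantity $xw_\alpha(x)$ does not decay while $\Var(Y_n(x))=x/n$ grows, so a naive moment bound fails and one genuinely has to invoke the vanishing of $f''$ at infinity, together with the exponential concentration of the Poisson law, to close the estimate.
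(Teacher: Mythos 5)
Your proposal is correct, and its first half is essentially the paper's own argument: the Poisson representation, Taylor's formula with integral remainder reducing everything to $n\,\mathbb{E}[R_x(T_n(x)/n)]$, and the whole quantitative estimate (the bound $|R_x(y)|\le\tfrac16\mathrm{Lip}(f'')|y-x|^3$, the Lyapunov step $\mathbb{E}|{\cdot}|^3\le(\mathbb{E}|{\cdot}|^4)^{3/4}$, the fourth central moment $3x^2/n^2+x/n^3$, subadditivity of $t\mapsto t^{3/4}$, and the optimisation of $x^p/(1+x^\alpha)$ for $p=3/4,3/2$ giving exactly $M_\alpha$) coincide with the paper. The qualitative half is genuinely different. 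The paper keeps one split on $\{|T_n(x)/n-x|<\delta\}$ uniformly in $x$, controls the tail by Cauchy--Schwarz, the fourth moment and the Poisson bound $\mathbb{P}(|T_n(x)/n-x|\ge\delta)\le2\exp(-n\delta^2/(2(x+\delta)))$, and then lets the weight do the work via $\sup_x xw_\alpha(x)$, $\sup_x\sqrt{x}\,w_\alpha(x)$ and the claim that $\sup_x xw_\alpha(x)e^{-n\delta^2/(2(x+\delta))}\to0$; that last step uses the decay of $xw_\alpha(x)$ at infinity, so it really runs only for $\alpha>1$ (for $\alpha=1$ the function $xw_1(x)e^{-n\delta^2/(2(x+\delta))}$ is increasing with supremum tending to $1$ for every $n$, and no finite maximizer $x_*$ exists). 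You instead split the spatial domain at a level $K$: on $[0,K]$ the classical modulus-of-continuity plus Chebyshev argument, and on $[K,\infty)$ the vanishing of $f''$ at infinity on the likely event $\{|T_n(x)/n-x|\le x/2\}$ together with Poisson concentration on its complement. This buys uniform control for all $\alpha\ge1$, including the borderline case $\alpha=1$ where the paper's estimate needs patching, at the cost of a longer case analysis and less explicit constants; your closing remark about why a naive moment bound cannot suffice there is exactly right. Two small points to tidy up: the final sentence has the limits in the wrong order --- the correct structure is to fix $K$, then $\delta$, take $\limsup_{n\to\infty}$, and only afterwards let $\delta\to0$ and $K\to\infty$ --- and the crude bound $|R_x(y)|\le2\|f\|_\infty+\|f'\|_\infty|y-x|+\tfrac12\|f''\|_\infty(y-x)^2$ tacitly assumes $f'$ is bounded (true, e.g.\ by Landau's inequality since $f$ and $f''$ are bounded, but simpler to avoid by using $|R_x(y)|\le\|f''\|_\infty(y-x)^2$ on that rare event).
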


\noindent
This theorem tells us a rate of convergence of 
$ \{n(\mathcal{P}_n-I)\}_{n=1}^\infty $
to the differential operator $\mathcal{A}$ with respect to the weighted norm. 
Due to the degeneracy of $\mathcal{A}$ at the boundary of $[0, \infty)$,
we need to take care of the boundary condition for $\mathcal{A}$, 
which is occasionally called the {\it Wentzell-type condition}. 

 Based on Theorem \ref{Thm:Voronovskaya-Szasz-Mirakyan}, 
we also establish the uniform convergence of the iterates of the 
Sz\'asz--Mirakyan operator to the $C_0$-semigroup generated by $\mathcal{A}$
with respect to the weighted norm,
together with its rate of convergence. 
The following is the second main result of the present paper. 

\begin{tm}
\label{Thm:semigroup-conv-SM}
For $ \alpha > 1 $, $ f \in C_\infty^{w_\alpha}([0, \infty)) $ 
and $ t \ge 0 $, we have 
    \[
    \lim_{n \to \infty}
    \|\mathcal{P}_n^{[nt]} f 
    - \mathsf{P}_t f\|_{\infty, w_\alpha}=0,
    \]
where $ ({\sf P}_t)_{t \ge 0} $ is a contraction $C_0$-semigroup
on $ C_\infty^{w_\alpha}([0, \infty)) $ generated by $ (\mathcal{A}, D(\mathcal{A})) $ defined by 
\eqref{generator-SM} and \eqref{generator-SM-domain}. 
Furthermore, let $ \mathcal{D}_0 $ be 
a subspace of $ C_\infty^2([0, \infty)) $ given by
    \[
    \mathcal{D}_0:=\Big\{f \in C_\infty^2([0, \infty)) \, \Big| \, 
    \text{$ f'' \in \mathrm{Lip}([0, \infty)) $, 
    $ {\sf P}_t f  \in C_\infty^2([0, \infty)) $ and 
    $ ({\sf P}_t f)''  \in \mathrm{Lip}([0, \infty)) $}\Big\}.
    \]
If $ f \in \mathcal{D}_0 $ and 
$ \alpha>3/2 $, we have
    \begin{equation}\label{SM-rate of conv}
    \begin{aligned}
    \|\mathcal{P}_n^{[nt]} f 
    - \mathsf{P}_t f\|_{\infty, w_\alpha}
    &\le \left(\sqrt{\frac{t}{n}}+\frac{1}{n}\right)
    \left(\|\mathcal{A}f\|_{\infty, w_\alpha}+\frac{1}{6\sqrt{n}}M_\alpha\mathrm{Lip}(f'')\right)\\
    &\hspace{1cm}+\int_0^t \frac{1}{6\sqrt{n}}
    M_\alpha\mathrm{Lip}\big(({\sf P}_sf)''\big) \, \dd s. 
    \end{aligned}
    \end{equation}

\end{tm}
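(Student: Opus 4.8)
The plan is to deduce the qualitative convergence from a Trotter--Kato type approximation theorem for contraction $C_0$-semigroups, feeding in Theorem \ref{Thm:Voronovskaya-Szasz-Mirakyan} as the consistency input, and then to extract the explicit estimate \eqref{SM-rate of conv} from a telescoping identity whose one-step error is controlled on the discrete side by Theorem \ref{Thm:Voronovskaya-Szasz-Mirakyan} and on the continuous side by a short-time estimate for $(\mathsf{P}_t)$. First I would record the two ingredients that make the abstract machinery available. On one hand, $\mathcal{P}_n$ maps $C_\infty^{w_\alpha}([0,\infty))$ into itself and is a contraction there: this follows from positivity of $\mathcal{P}_n$, the reproduction identities $\mathcal{P}_n 1=1$ and $\mathcal{P}_n(\mathrm{id})=\mathrm{id}$, and the bound $\mathcal{P}_n(1+x^\alpha)\le C_\alpha(1+x^\alpha)$ coming from the Poisson moments (here $\alpha\ge1$ is used; cf. the computations behind Proposition \ref{Prop:conv-SM-weighted}). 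On the other hand, $(\mathcal{A},D(\mathcal{A}))$ generates a contraction $C_0$-semigroup $(\mathsf{P}_t)_{t\ge0}$ on $C_\infty^{w_\alpha}([0,\infty))$: dissipativity is immediate from the positive maximum principle, and the range condition ``$\mathrm{Ran}(\lambda-\mathcal{A})$ dense'' is obtained by solving the ODE $\lambda f-\tfrac x2 f''=g$ under the Wentzell boundary behaviour encoded in \eqref{generator-SM-domain} (alternatively, the generation can be obtained together with the convergence from the Trotter--Kato theorem once a suitable core and the range condition are in hand). With $\|\mathcal{P}_n\|\le1$, $\|\mathsf{P}_t\|\le1$, and, by Theorem \ref{Thm:Voronovskaya-Szasz-Mirakyan}, $n(\mathcal{P}_n-I)f\to\mathcal{A}f$ in $\|\cdot\|_{\infty,w_\alpha}$ for $f$ in a core of $\mathcal{A}$, the Trotter--Kato theorem in discrete form (cf.~\cite{Trotter, Kurtz}) gives $\mathcal{P}_n^{[nt]}f\to\mathsf{P}_tf$, first on the core and then, by density and uniform boundedness, for all $f\in C_\infty^{w_\alpha}([0,\infty))$ and $t\ge0$; this proves the first assertion.

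For the quantitative estimate, fix $f\in\mathcal{D}_0$ and $\alpha>3/2$, put $m:=[nt]$, and start from the telescoping identity
\[
\mathcal{P}_n^{m}f-\mathsf{P}_{m/n}f=\sum_{j=0}^{m-1}\mathcal{P}_n^{\,m-1-j}\bigl(\mathcal{P}_n-\mathsf{P}_{1/n}\bigr)\mathsf{P}_{j/n}f,
\]
together with the time-discretisation correction $\|\mathsf{P}_tf-\mathsf{P}_{m/n}f\|_{\infty,w_\alpha}\le(t-m/n)\|\mathcal{A}f\|_{\infty,w_\alpha}\le\tfrac1n\|\mathcal{A}f\|_{\infty,w_\alpha}$ (which uses $\mathcal{D}_0\subset D(\mathcal{A})$, valid for $\alpha>1$ since $f''$ is then bounded and $w_\alpha(x)x\to0$). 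Using $\|\mathcal{P}_n^{\,m-1-j}\|\le1$ and, crucially, the cancellation of the leading terms in
\[
(\mathcal{P}_n-\mathsf{P}_{1/n})g=\Bigl(\mathcal{P}_ng-g-\tfrac1n\mathcal{A}g\Bigr)-\Bigl(\mathsf{P}_{1/n}g-g-\tfrac1n\mathcal{A}g\Bigr),\qquad g=\mathsf{P}_{j/n}f,
\]
the estimate reduces to summing two families of remainders; bounding $\mathcal{P}_ng-g$ and $\mathsf{P}_{1/n}g-g$ separately would only give an $O(t)$ term that does not vanish. Since $f\in\mathcal{D}_0$, each $g=\mathsf{P}_{j/n}f$ lies in $C_\infty^2([0,\infty))$ with $g''\in\mathrm{Lip}([0,\infty))$, so the second part of Theorem \ref{Thm:Voronovskaya-Szasz-Mirakyan} bounds the first remainder by $\tfrac{1}{6n^{3/2}}M_\alpha\,\mathrm{Lip}\bigl((\mathsf{P}_{j/n}f)''\bigr)$; summing over $j$ and comparing the left Riemann sum with the corresponding integral gives $\int_0^t\tfrac{1}{6\sqrt n}M_\alpha\,\mathrm{Lip}\bigl((\mathsf{P}_sf)''\bigr)\,\dd s$ plus the leftover $j=0$ term $\tfrac{1}{n}\cdot\tfrac{1}{6\sqrt n}M_\alpha\,\mathrm{Lip}(f'')$.

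It remains to bound the continuous-side remainders $\mathsf{P}_{1/n}g-g-\tfrac1n\mathcal{A}g=\int_0^{1/n}(\mathsf{P}_r-I)\mathcal{A}g\,\dd r$ with $g=\mathsf{P}_{j/n}f$. Writing $\mathcal{A}g=\mathsf{P}_{j/n}\mathcal{A}f$, using the representation $\mathsf{P}_rh(x)=\E[h(\X_r(x))]$ for the diffusion $\dd\X_t=\sqrt{\X_t}\,\dd W_t$ generated by $\mathcal{A}$, together with the moment identities $\E[\X_r(x)]=x$ and $\E[(\X_r(x)-x)^2]=xr$ from It\^o's formula and the splitting $\mathcal{A}f(y)-\mathcal{A}f(x)=\tfrac y2(f''(y)-f''(x))+\tfrac{y-x}{2}f''(x)$ with $|f''(y)-f''(x)|\le\mathrm{Lip}(f'')|y-x|$, one obtains a short-time bound of the form
\[
\|(\mathsf{P}_r-I)\mathcal{A}f\|_{\infty,w_\alpha}\le C_\alpha\sqrt r\bigl(\|\mathcal{A}f\|_{\infty,w_\alpha}+\mathrm{Lip}(f'')\bigr),\qquad 0\le r\le1,
\]
where the restriction $\alpha>3/2$ is forced by the requirement $\sup_{x\ge0}w_\alpha(x)x^{3/2}<\infty$. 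Feeding this back, combined with a short-time smoothing control of $\mathsf{P}_{j/n}$ so that the $j$-th remainder is of order $n^{-3/2}(j/n)^{-1/2}$ for $j\ge1$ (the $j=0$ term being handled by $\|\mathsf{P}_{1/n}f-f\|_{\infty,w_\alpha}\le\tfrac1n\|\mathcal{A}f\|_{\infty,w_\alpha}$), the elementary bound $\sum_{j=1}^{m}j^{-1/2}\le2\sqrt m\le2\sqrt{nt}$ produces the factor $\sqrt{t/n}+\tfrac1n$ in front of $\|\mathcal{A}f\|_{\infty,w_\alpha}+\tfrac{1}{6\sqrt n}M_\alpha\mathrm{Lip}(f'')$. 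Collecting the three contributions yields \eqref{SM-rate of conv}.

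The main obstacle is exactly this last step: one must produce the $\sqrt{t/n}$ rate from the continuous-side Taylor remainder using only the regularity built into $\mathcal{D}_0$ (no $C^3$ or $D(\mathcal{A}^2)$ hypothesis), which requires both the sharp short-time estimate for $(\mathsf{P}_r-I)\mathcal{A}f$ and a matching short-time smoothing estimate for the degenerate diffusion semigroup, followed by a careful resummation (Riemann-sum comparisons, separate handling of the first term, and the series $\sum j^{-1/2}$) to land precisely on the constant $M_\alpha$ and the factor $\sqrt{t/n}+\tfrac1n$. By comparison, verifying that $\mathcal{P}_n$ is a contraction and that $(\mathcal{A},D(\mathcal{A}))$ generates a contraction $C_0$-semigroup on $C_\infty^{w_\alpha}([0,\infty))$ is routine, though it also uses $\alpha>1$.
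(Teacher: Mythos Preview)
Your qualitative argument is essentially the paper's: Proposition \ref{Prop:conv-SM-weighted} supplies the contraction property of $\mathcal{P}_n$ on $C_\infty^{w_\alpha}([0,\infty))$, Theorem \ref{Thm:Voronovskaya-Szasz-Mirakyan} supplies the consistency $n(\mathcal{P}_n-I)f\to\mathcal{A}f$ on the core $C_\infty^2([0,\infty))$, the range condition for $I-\mathcal{A}$ is taken from \cite{AC2}, and Trotter's theorem (Proposition \ref{Prop:Trotter}) closes the loop. That part is fine.

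The quantitative part, however, does not go through as written. Your telescoping
\[
\mathcal{P}_n^{m}f-\mathsf{P}_{m/n}f=\sum_{j=0}^{m-1}\mathcal{P}_n^{\,m-1-j}\bigl(\mathcal{P}_n-\mathsf{P}_{1/n}\bigr)\mathsf{P}_{j/n}f
\]
together with the split $(\mathcal{P}_n-\mathsf{P}_{1/n})g=(\mathcal{P}_ng-g-\tfrac1n\mathcal{A}g)-(\mathsf{P}_{1/n}g-g-\tfrac1n\mathcal{A}g)$ handles the \emph{discrete} remainder correctly via Theorem \ref{Thm:Voronovskaya-Szasz-Mirakyan}, producing the Riemann sum that becomes $\int_0^t\frac{1}{6\sqrt n}M_\alpha\mathrm{Lip}((\mathsf{P}_sf)'')\,\dd s$. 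The trouble is the \emph{continuous} remainder. Your own short-time bound $\|(\mathsf{P}_r-I)\mathcal{A}g\|_{\infty,w_\alpha}\le C_\alpha\sqrt r\,(\cdots)$, when summed over the $m\approx nt$ values of $j$, yields a term of order $t/\sqrt n$, not $\sqrt{t/n}$. To upgrade this to $\sqrt{t/n}$ you invoke a ``short-time smoothing control'' making the $j$-th remainder of size $n^{-3/2}(j/n)^{-1/2}$; but nothing in the hypotheses on $\mathcal{D}_0$ (which only ask that $(\mathsf{P}_tf)''$ be Lipschitz, with no quantitative $t$-dependence) or in the analysis of the degenerate generator $\mathcal{A}$ on $C_\infty^{w_\alpha}$ provides such an analytic-semigroup type estimate, and you do not prove one. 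Even granting some smoothing, your argument would produce an unspecified $C_\alpha$ rather than the precise form $(\sqrt{t/n}+1/n)\bigl(\|\mathcal{A}f\|_{\infty,w_\alpha}+\tfrac{1}{6\sqrt n}M_\alpha\mathrm{Lip}(f'')\bigr)$.

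The paper avoids this entirely: it does not telescope $\mathcal{P}_n^m-\mathsf{P}_{m/n}$ directly but invokes the abstract rate-of-convergence theorem of Campiti--Tacelli (Proposition \ref{Prop:Trotter-rate}), which only requires the two seminorm bounds
\[
\|\mathcal{A}_nf\|_{\infty,w_\alpha}\le\varphi_n(f):=\|\mathcal{A}f\|_{\infty,w_\alpha}+\tfrac{1}{6\sqrt n}M_\alpha\mathrm{Lip}(f''),\qquad
\|\mathcal{A}_nf-\mathcal{A}f\|_{\infty,w_\alpha}\le\psi_n(f):=\tfrac{1}{6\sqrt n}M_\alpha\mathrm{Lip}(f''),
\]
both immediate from Theorem \ref{Thm:Voronovskaya-Szasz-Mirakyan}. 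The mechanism behind Proposition \ref{Prop:Trotter-rate} is different from yours: one compares $\mathcal{P}_n^{[nt]}$ not with $\mathsf{P}_t$ directly but with the randomized semigroup $e^{t\mathcal{A}_n}=e^{tn(\mathcal{P}_n-I)}$, for which $\|\mathcal{P}_n^{m}f-e^{(m/n)\mathcal{A}_n}f\|\le\E|N(m)-m|\cdot\|\mathcal{P}_nf-f\|\le\sqrt m\cdot\tfrac1n\|\mathcal{A}_nf\|$ (with $N$ a rate-one Poisson process), giving precisely $\sqrt{t/n}\,\varphi_n(f)$ evaluated at $f$ alone, and then $e^{t\mathcal{A}_n}f-\mathsf{P}_tf=\int_0^te^{(t-s)\mathcal{A}_n}(\mathcal{A}_n-\mathcal{A})\mathsf{P}_sf\,\dd s$ gives the integral term. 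No semigroup smoothing is needed.
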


We should emphasize that
our arguments in the proofs basically rely on 
the probabilistic approaches. 
In particular, the proof of Theorem \ref{Thm:semigroup-conv-SM} is given 
by the combination of the probabilistic approaches with
some results in functional analysis such as 
Trotter's approximation theorem (cf.~\cite{Trotter,Kurtz})
and a result on its rate of convergence (cf.~\cite{CT,CTa}).
On the other hand, 
one wonders if or not some probabilistic interpretation of 
the $C_0$-semigroup $({\sf P}_t)_{t \ge 0}$ can be given, 
since its infinitesimal generator $\mathcal{A}$ is a second order 
differential operator
and it may highly relate to a continuous stochastic process 
called a diffusion process. 
After the proofs of main results, we consider such a
probabilistic interpretation and obtain a relation between
$({\sf P}_t)_{t \ge 0}$ and a diffusion semigroup on $ C_\infty^{w_\alpha}([0, \infty)) $. 
Furthermore, we are going to describe this relation in terms of a 
time-homogeneous Markov chain constructed by the iterates of the
Sz\'asz--Mirakyan operator and a diffusion process 
captured as its scaling limit. 
See Theorem \ref{Thm:SM-diffusion}.

The rest of the present paper is organized as follows:
In Section \ref{Section:Szasz-Mirakyan}, we review 
several known facts about the approximation properties of 
the Sz\'asz--Mirakyan operator. 
We show in Section \ref{Section:proofs} our two main results, 
Theorems \ref{Thm:Voronovskaya-Szasz-Mirakyan} and \ref{Thm:semigroup-conv-SM}. 
Section \ref{Section:probab-approach} discusses an interesting 
interpretation of main theorems from probabilistic perspectives. 
In particular, we show in Theorem \ref{Thm:SM-diffusion}
that the $C_0$-semigroup obtained as a limit of the iterates of \eqref{Szasz-Mirakyan}
coincides with a certain diffusion semigroup. 
In Section \ref{Section:Conclusion}, we give not only a conclusion of the present paper
but also further possible directions of this study.

\section{{\bf Approximation properties of the 
Sz\'asz--Mirakyan operator}}
\label{Section:Szasz-Mirakyan}

We start with the approximation properties of the 
Sz\'asz--Mirakyan operator $ \mathcal{P}_n $ 
defined by \eqref{Szasz-Mirakyan}. 
Suppose that  $ \{Y_i=Y_i(x)\}_{i=1}^\infty $ 
is a sequence of independent and identically distributed 
Poisson random variables 
with the parameter $ x \in [0, \infty) $, that is, 
    \[
    \mathbb{P}(Y(x)=k)=e^{-x}\frac{x^k}{k!}, \qquad k=0, 1, 2, \dots.
    \]
Put $ T_n(x):=Y_1(x)+Y_2(x)+\cdots+Y_n(x)$ 
for $ n \in \N $. Then, the distribution of 
$ T_n(x) $ is also Poisson with the parameter $ nx $
by reproductive property.
Hence, we can express \eqref{Szasz-Mirakyan} as 
    \begin{equation}\label{Szasz-expectation}
        \mathcal{P}_n f(x)
        =\E\left[f\left(\frac{1}{n}T_n(x)\right)\right],
        \qquad n \in \N, \, 
        f \in C([0, \infty)), \, x \in [0, \infty).
    \end{equation}
It is convenient to compute several moments of $ T_n(x) $ for later use.  
They are given by
    \begin{equation}\label{Poisson-moments}
    \begin{split}
        \E[T_n(x)]&=nx, \\
        \E[T_n(x)^2]&=(nx)^2+nx, \\
        \E[T_n(x)^3]&=(nx)^3+3(nx)^2+nx, \\
        \E[T_n(x)^4]&=(nx)^4+6(nx)^3+7(nx)^2+nx.
        \end{split}
    \end{equation}
    
We now restrict ourselves to the subspace $ C_\infty([0, \infty)) $. 
Then, we have the following. 

\begin{pr}
If $ f \in C_\infty([0, \infty)) $, then  
$ \mathcal{P}_n f \in C_\infty([0, \infty)) $
and $ \|\mathcal{P}_n f\|_\infty \le \|f\|_\infty $.
\end{pr}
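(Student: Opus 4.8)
The plan is to verify the three assertions in turn: the operator norm bound, continuity of $\mathcal{P}_n f$ on $[0,\infty)$, and the decay of $\mathcal{P}_n f$ at infinity; the last of these is where the actual content lies. For the norm bound I would simply note that the Poisson weights $e^{-nx}(nx)^k/k!$ sum to one, so that $\mathcal{P}_n$ is a positive linear operator fixing the constant function $1$; equivalently, using the probabilistic representation \eqref{Szasz-expectation}, $|\mathcal{P}_n f(x)| \le \E[\,|f(T_n(x)/n)|\,] \le \|f\|_\infty$ for every $x \ge 0$, whence $\|\mathcal{P}_n f\|_\infty \le \|f\|_\infty$.

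For continuity I would show that the series \eqref{Szasz-Mirakyan} converges uniformly on compact subsets of $[0,\infty)$. Each term $x \mapsto e^{-nx}(nx)^k f(k/n)/k!$ is continuous, and on $[0,R]$ it is dominated in absolute value by $\|f\|_\infty (nR)^k/k!$, a summable bound independent of $x$ since $\sum_{k}(nR)^k/k! = e^{nR}$. The Weierstrass $M$-test then yields uniform convergence on $[0,R]$, so $\mathcal{P}_n f$ is continuous on each $[0,R]$, hence on all of $[0,\infty)$.

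The main obstacle is showing $\mathcal{P}_n f(x) \to 0$ as $x \to \infty$. Here I would exploit that, with $n$ fixed, $T_n(x)/n$ concentrates around $x$ and therefore escapes to infinity as $x \to \infty$, while $f$ vanishes there. Concretely, given $\ve>0$ choose $M>0$ with $|f(y)| < \ve$ for all $y > M$; splitting the expectation in \eqref{Szasz-expectation} over the event $\{T_n(x) \le nM\}$ and its complement gives
\[
|\mathcal{P}_n f(x)| \le \|f\|_\infty\,\mathbb{P}(T_n(x) \le nM) + \ve .
\]
Since $T_n(x)$ is Poisson with both mean and variance equal to $nx$, Chebyshev's inequality bounds, for $x > M$,
\[
\mathbb{P}(T_n(x) \le nM) \le \mathbb{P}\big(|T_n(x)-nx| \ge n(x-M)\big) \le \frac{nx}{(n(x-M))^2} = \frac{x}{n(x-M)^2},
\]
which tends to $0$ as $x \to \infty$. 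Hence $\limsup_{x\to\infty}|\mathcal{P}_n f(x)| \le \ve$ for every $\ve>0$, so $\mathcal{P}_n f(x)\to 0$, and combining the three steps we conclude $\mathcal{P}_n f \in C_\infty([0,\infty))$.
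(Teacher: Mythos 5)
Your proposal is correct in all three steps, but note that the paper itself offers no proof of this proposition: it is stated as a known preliminary fact (it is standard in the Sz\'asz--Mirakyan literature, cf.\ Sz\'asz and Altomare), so there is no argument of the paper to compare against. Your argument fills this gap completely: the contraction bound follows exactly as you say, since the Poisson weights sum to one (equivalently, from the representation \eqref{Szasz-expectation}); continuity via the Weierstrass $M$-test on $[0,R]$, dominating $e^{-nx}(nx)^k|f(k/n)|/k!$ by $\|f\|_\infty (nR)^k/k!$, is sound; and the only genuinely substantive point, the vanishing at infinity, is handled correctly, since for $x>M$ the event $\{T_n(x)\le nM\}$ is contained in $\{|T_n(x)-nx|\ge n(x-M)\}$ and Chebyshev with $\mathrm{Var}(T_n(x))=nx$ gives the bound $x/(n(x-M)^2)\to 0$. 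One could alternatively invoke the Poisson tail bound \eqref{P_n-C} used later in the paper, which gives exponential decay in $x$, but the Chebyshev estimate is entirely sufficient here and keeps the argument elementary.
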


\noindent
Moreover, with the aid of the Poisson distribution, 
Sz\'asz showed in \cite{Szasz} that 
the similar approximation property to 
Proposition \ref{Prop:Bernstein's thm} holds 
for \eqref{Szasz-Mirakyan} as well. 

\begin{pr}[cf.~{\cite[Theorem 3]{Szasz}}]
\label{Prop:Szasz's thm}
For every $ f \in C_\infty([0, \infty)) $, we have
    \[
    \lim_{n \to \infty} 
    \|\mathcal{P}_n f - f\|_\infty=0.
    \]
\end{pr}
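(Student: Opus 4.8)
The plan is to follow the probabilistic strategy behind Proposition \ref{Prop:Bernstein's thm}, using the representation \eqref{Szasz-expectation} together with the weak law of large numbers and Chebyshev's inequality, but with one extra device to handle the unboundedness of the interval. First I would record the basic facts: any $ f \in C_\infty([0, \infty)) $ is bounded and uniformly continuous on $ [0, \infty) $ (it extends continuously to the one-point compactification), and by \eqref{Poisson-moments} the random variable $ T_n(x)/n $ has mean $ x $ and variance $ x/n $. The naive estimate $ \mathbb{P}(|T_n(x)/n - x| > \ve) \le x/(\ve^2 n) $ is, unlike in the Bernstein case where $ x(1-x) \le 1/4 $, \emph{not} uniform in $ x \in [0, \infty) $; overcoming this is the main obstacle, and the remedy is to treat large $ x $ separately and exploit the decay of $ f $ at infinity.

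Fix $ \eta > 0 $ and choose $ R > 0 $ with $ |f(y)| \le \eta $ whenever $ y \ge R $. On the region $ x \ge 2R $ one has $ |f(x)| \le \eta $, and writing $ \mathcal{P}_n f(x) = \E[f(T_n(x)/n)] $ I would split the expectation over $ \{T_n(x) \ge nR\} $ and $ \{T_n(x) < nR\} $: on the former $ |f(T_n(x)/n)| \le \eta $, while the latter is a large-deviation event, since $ x - R \ge x/2 $ forces $ \{T_n(x) < nR\} \subseteq \{|T_n(x) - nx| \ge nx/2\} $, and Chebyshev gives $ \mathbb{P}(T_n(x) < nR) \le 4\Var(T_n(x))/(nx)^2 = 4/(nx) \le 2/(nR) $. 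Hence $ |\mathcal{P}_n f(x) - f(x)| \le 2\eta + 2\|f\|_\infty/(nR) $ for all $ x \ge 2R $.

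On the complementary region $ x \in [0, 2R] $ I would invoke uniform continuity of $ f $: pick $ \delta > 0 $ so that $ |f(u) - f(v)| \le \eta $ whenever $ |u - v| \le \delta $, and split $ \E[|f(T_n(x)/n) - f(x)|] $ over $ \{|T_n(x)/n - x| \le \delta\} $ and its complement. The first contribution is at most $ \eta $, and by Chebyshev the second is at most $ 2\|f\|_\infty \, x/(\delta^2 n) \le 4R\|f\|_\infty/(\delta^2 n) $, which is now uniformly small because $ x $ is confined to $ [0, 2R] $. Combining the two regions, for $ n $ large enough we obtain $ \|\mathcal{P}_n f - f\|_\infty \le 3\eta $, and since $ \eta > 0 $ is arbitrary the claim follows. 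The one genuinely delicate point in the bookkeeping is the order of the choices: $ R $ must depend on $ \eta $ alone, and only afterwards are $ \delta $ and the threshold on $ n $ selected, so that the large-$ x $ and bounded-$ x $ estimates glue together uniformly in $ x $.
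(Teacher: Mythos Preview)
Your argument is correct: the splitting into $x\ge 2R$ and $x\in[0,2R]$ is exactly the right device to compensate for the fact that $\Var(T_n(x)/n)=x/n$ is unbounded in $x$, and the bookkeeping (first fix $R$ from the decay of $f$, then $\delta$ from uniform continuity, then $n$) is in the right order.

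As for comparison with the paper: the paper does not actually prove Proposition~\ref{Prop:Szasz's thm}; it is stated as a citation of Sz\'asz's original result. The only argument the paper supplies for this type of convergence is the Korovkin-type sketch given immediately after Proposition~\ref{Prop:conv-SM-weighted}: one computes $\mathcal{P}_n f_{\lambda_k}$ explicitly for the three exponentials $f_{\lambda_k}(x)=e^{-\lambda_k x}$, checks uniform convergence for those, and then invokes that $\{f_{\lambda_1},f_{\lambda_2},f_{\lambda_3}\}$ is a Korovkin set (Proposition~\ref{Korovkin-1} for $C_\infty([0,\infty))$, Proposition~\ref{Korovkin-3} for the weighted space). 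So your route is genuinely different: you give a self-contained probabilistic proof via Chebyshev and a large-$x$/bounded-$x$ decomposition, whereas the paper's framework outsources the hard part to the Korovkin machinery. Your approach has the virtue of being elementary and of making transparent exactly where the unboundedness of $[0,\infty)$ bites; the Korovkin route, once the abstract theorem is granted, reduces everything to a one-line computation on three test functions and transfers without change to the weighted norms $\|\cdot\|_{\infty,w_\alpha}$.
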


We next give an approximation property of 
the Sz\'asz--Mirakyan operator as a linear operator 
acting on the weighted function space. 
Let $ \alpha \ge 1 $ and $ w_\alpha $ the weight function defined by \eqref{weight}.
Note that $ C_\infty([0, \infty)) $
    is dense in $ C_\infty^{w_\alpha}([0, \infty)) $
    for all $ \alpha>1 $. 
    Therefore, every dense subspace of 
    $ C_\infty([0, \infty)) $ is also dense 
    in $ C_\infty^{w_\alpha}([0, \infty)) $. 

Then, we also have the following. 

\begin{pr}[cf.~{\cite[Theorem 6.17]{Altomare}}]
\label{Prop:conv-SM-weighted}
If $ \alpha \ge 1 $, $ n \in \N $
and $ f \in C_\infty^{w_\alpha}([0, \infty)) $, 
then we have 
$ \mathcal{P}_nf \in C_\infty^{w_\alpha}([0, \infty)) $ and 
$ \|\mathcal{P}_nf\|_{\infty, w_\alpha} \le \|f\|_{\infty, w_\alpha} $.
Moreover, we have 
    \[
    \lim_{n \to \infty}
    \|\mathcal{P}_n f - f\|_{\infty, w_\alpha}=0,
    \qquad f \in C_\infty^{w_\alpha}([0, \infty)). 
    \]
\end{pr}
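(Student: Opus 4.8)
The plan is to establish the three assertions in order: the norm estimate, then the invariance $\mathcal{P}_nf\in C_\infty^{w_\alpha}([0,\infty))$, and finally the convergence, the last one by a density argument resting on Proposition~\ref{Prop:Szasz's thm}. For the norm estimate, fix $f\in C_\infty^{w_\alpha}([0,\infty))$ and put $M:=\|f\|_{\infty,w_\alpha}$, so that $|f(y)|\le M(1+y^\alpha)$ for every $y\ge 0$. The probabilistic representation $\mathcal{P}_nf(x)=\E[f(T_n(x)/n)]$ of \eqref{Szasz-expectation} together with positivity of $\mathcal{P}_n$ yield
\[
|\mathcal{P}_nf(x)|\le M\,\E\big[1+(T_n(x)/n)^\alpha\big]=M\,\mathcal{P}_n\big(1+(\cdot)^\alpha\big)(x),
\]
which already shows that the series defining $\mathcal{P}_nf$ converges absolutely, so $\mathcal{P}_nf$ is well defined. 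Multiplying by $w_\alpha(x)$, the norm estimate reduces to controlling $\mathcal{P}_n$ on the single function $1+(\cdot)^\alpha$: for integer $\alpha$ the moment identities \eqref{Poisson-moments} give $\mathcal{P}_n(1+(\cdot)^\alpha)(x)$ outright, and for general $\alpha\ge 1$ one first dominates $(T_n(x)/n)^\alpha$ by an integer power using Jensen's (or H\"older's) inequality and then applies \eqref{Poisson-moments}. I would quote \cite[Theorem~6.17]{Altomare} for the value of the resulting constant.

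For the invariance, observe that the series \eqref{Szasz-Mirakyan} is dominated on every compact subset of $[0,\infty)$ by the convergent majorant furnished by the previous step, hence converges locally uniformly; consequently $\mathcal{P}_nf$, and so $w_\alpha\mathcal{P}_nf$, is continuous. The real content is decay at infinity. Fix $\varepsilon>0$; since $w_\alpha f$ vanishes at infinity, choose $R>0$ with $|f(y)|\le\varepsilon(1+y^\alpha)$ for $y\ge R$, and set $C_R:=\sup_{[0,R]}|f|<\infty$. Splitting the expectation according to whether $T_n(x)/n\le R$,
\[
|\mathcal{P}_nf(x)|\le C_R\,\mathbb{P}\big(T_n(x)/n\le R\big)+\varepsilon\,\E\big[1+(T_n(x)/n)^\alpha\big]\le C_R\,\mathbb{P}\big(T_n(x)/n\le R\big)+\varepsilon\,c_\alpha(1+x^\alpha),
\]
with $c_\alpha$ again produced by \eqref{Poisson-moments}. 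Multiplying by $w_\alpha(x)$ and letting $x\to\infty$: the first term tends to $0$ since $w_\alpha(x)\to 0$ and the probability is $\le 1$ (by Chebyshev it is in fact $O(1/(nx))$), while $\varepsilon c_\alpha w_\alpha(x)(1+x^\alpha)=\varepsilon c_\alpha$; hence $\limsup_{x\to\infty}w_\alpha(x)|\mathcal{P}_nf(x)|\le\varepsilon c_\alpha$, and $\varepsilon\downarrow 0$ finishes it.

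For the convergence I would argue by density. As recalled just before the statement, $C_\infty([0,\infty))$---hence the space of compactly supported continuous functions---is dense in $C_\infty^{w_\alpha}([0,\infty))$ for $\alpha>1$. Given $f\in C_\infty^{w_\alpha}([0,\infty))$ and $\varepsilon>0$, pick $g\in C_\infty([0,\infty))$ with $\|f-g\|_{\infty,w_\alpha}<\varepsilon$ and split
\[
\|\mathcal{P}_nf-f\|_{\infty,w_\alpha}\le\|\mathcal{P}_n(f-g)\|_{\infty,w_\alpha}+\|\mathcal{P}_ng-g\|_{\infty,w_\alpha}+\|g-f\|_{\infty,w_\alpha}.
\]
The outer two terms are controlled by the norm estimate; for the middle one, $0<w_\alpha\le 1$ gives $\|\cdot\|_{\infty,w_\alpha}\le\|\cdot\|_\infty$, so $\|\mathcal{P}_ng-g\|_{\infty,w_\alpha}\le\|\mathcal{P}_ng-g\|_\infty\to 0$ by Proposition~\ref{Prop:Szasz's thm}. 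Hence $\limsup_{n\to\infty}\|\mathcal{P}_nf-f\|_{\infty,w_\alpha}$ can be made as small as we please.

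The main obstacle is the decay at infinity of $w_\alpha\mathcal{P}_nf$: it does not follow formally from $f\in C_\infty^{w_\alpha}([0,\infty))$, because $\mathcal{P}_nf(x)$ averages $f$ over all of $[0,\infty)$ rather than only near $x$, so one genuinely needs the split-plus-tail estimate above, combining the $\varepsilon$-domination of $|f|$ by $1+(\cdot)^\alpha$ far out with the decay of $w_\alpha$. A secondary technical point is the norm estimate, where one must follow $\mathcal{P}_n$ acting on $1+(\cdot)^\alpha$---immediate for integer $\alpha$ from \eqref{Poisson-moments} but requiring the Jensen/H\"older reduction otherwise---and this is exactly the computation carried out in \cite[Theorem~6.17]{Altomare}. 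Everything else (continuity of $\mathcal{P}_nf$ and the three-$\varepsilon$ density argument) is routine once the uniform bound is in hand.
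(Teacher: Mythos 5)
Your proposal is correct in substance, but it takes a genuinely different route from the paper's. For the convergence assertion the paper does not argue by density: it computes $\mathcal{P}_n f_{\lambda}$ in closed form for the three exponentials $f_{\lambda_k}(x)=e^{-\lambda_k x}$, checks $\|\mathcal{P}_nf_{\lambda_k}-f_{\lambda_k}\|_{\infty,w_\alpha}\to 0$, and then invokes the weighted Korovkin transfer result (Proposition \ref{Korovkin-3}: a Korovkin set of $C_\infty([0,\infty))$ remains one for $C_\infty^{w_\alpha}([0,\infty))$ because $w_\alpha$ is bounded). Your three-$\varepsilon$ argument — truncation density, a uniform bound on $\|\mathcal{P}_n\|_{\infty,w_\alpha}$, and Proposition \ref{Prop:Szasz's thm} for the middle term — is essentially the proof of that transfer principle specialized to $\mathcal{P}_n$; it is more self-contained and makes explicit exactly where the weighted operator bound and the decay of $w_\alpha\mathcal{P}_nf$ at infinity are used (your split-at-level-$R$ estimate for the latter is the real content, and the paper supplies no detail there, citing \cite[Theorem 6.17]{Altomare} for the mapping and norm assertions). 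What the paper's route buys is a very short argument reduced to three explicitly computable test functions, at the price of quoting the Korovkin machinery; note that the definition of a Korovkin set already presupposes $\sup_n\|L_n\|<\infty$, so the paper's route needs the uniform weighted bound just as yours does.

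Two caveats. First, your moment-domination step cannot yield the constant $1$ claimed in the statement: by Jensen's inequality $\E[(T_n(x)/n)^\alpha]\ge x^\alpha$ for $\alpha\ge 1$, so your bound produces $\sup_x w_\alpha(x)\,\E[1+(T_n(x)/n)^\alpha]$, which is strictly larger than $1$ when $\alpha>1$ (for $\alpha=2$ it equals $1+\tfrac{1}{2n}$). Any uniform constant suffices for your density argument, but you are then, like the paper, relying on the cited reference for the precise norm inequality rather than proving it. Second, you invoke density of $C_\infty([0,\infty))$ in $C_\infty^{w_\alpha}([0,\infty))$ only for $\alpha>1$, while the statement allows $\alpha=1$; this is harmless, since the truncation you sketch (cut $f$ off at level $R$ and use that $w_\alpha f$ vanishes at infinity) gives density of compactly supported functions for every $\alpha\ge 1$, but you should say so rather than lean on the paper's remark, which is stated only for $\alpha>1$.
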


In fact, the latter assertion is an easy consequence of 
the {\it Korovkin-type theorem}. 
Let $ 0<\lambda_1<\lambda_2<\lambda_3 $ and 
$ f_{\lambda_k}(x)=\exp(-\lambda_k x), \, k=1, 2, 3, \, x \in [0, \infty) $. 
Then, one has 
    \[
    \mathcal{P}_n f_{\lambda_k}(x)=\exp\left\{-nx\left(1-\exp
    \left(-\frac{\lambda_k}{n}\right)\right)\right\}, 
    \qquad i=1, 2, 3, \, x \in [0, \infty),
    \]
and $ \|\mathcal{P}_nf_{\lambda_k}-f_{\lambda_k}\|_{\infty, w_\alpha} 
\to 0 $ as $ n \to \infty $
holds for $ k=1, 2, 3 $. 
Since $\{f_{\lambda_1}, f_{\lambda_2}, f_{\lambda_3}\} $
forms a {\it Korovkin set} of $ C_\infty^{w_\alpha}([0, \infty)) $,  
we  apply Proposition \ref{Korovkin-3} to conclude  
$ \|\mathcal{P}_n f - f\|_{\infty, w_\alpha} \to 0 $ as $ n \to \infty $
for every $ f \in C_\infty^{w_\alpha}([0, \infty)) $. 
The reader may consult \cite{Altomare}, 
which is a concise survey on Korovkin-type theorems.

\section{{\bf Proofs of Theorems \ref{Thm:Voronovskaya-Szasz-Mirakyan} and 
\ref{Thm:semigroup-conv-SM}}}
\label{Section:proofs}

In this section, we aim to give the proofs of 
Theorems \ref{Thm:Voronovskaya-Szasz-Mirakyan} and 
\ref{Thm:semigroup-conv-SM} 
by putting an emphasis of the probabilistic representation 
\eqref{Szasz-expectation} of the Sz\'asz--Mirakyan operator. 
At the beginning, we demonstrate the proof of Theorem \ref{Thm:Voronovskaya-Szasz-Mirakyan}.

\begin{proof}[Proof of Theorem {\rm\ref{Thm:Voronovskaya-Szasz-Mirakyan}}]
It follows from \eqref{Poisson-moments} that 
    \[ 
    \E\left[ \left(\frac{1}{n}T_n(x)-x\right) \right]=0, \qquad
    \E\left[ \left(\frac{1}{n}T_n(x)-x\right)^2 \right]
    =\frac{x}{n}.
    \]
By applying the Taylor formula with the integral remainder, one has
    \begin{align}
    \label{P_n-Taylor}
        &\mathcal{P}_nf(x)-f(x) \nn\\
        &=\E\left[ f\left(\frac{1}{n}T_n(x)\right)-f(x)\right] \nn\\
        &=\E\left[f'(x)\left(\frac{1}{n}T_n(x)-x\right)
        +\int_x^{T_n(x)/n}\left(\frac{1}{n}T_n(x)-t\right)f''(t) \, \dd t\right]\nn\\
        &=\E\left[\int_x^{T_n(x)/n}\left(\frac{1}{n}T_n(x)-t\right)
        \big(f''(t) - f''(x)\big) \, \dd t\right]+\frac{x}{2n}f''(x), 
        \qquad x \in [0, \infty),
    \end{align}
where we used 
    \[
    \E\left[\int_x^{T_n(x)/n}\left(\frac{1}{n}T_n(x)-t\right)  \, \dd t\right]
    =\E\left[\frac{1}{2}\left(\frac{1}{n}T_n(x)-x\right)^2\right]=\frac{x}{2n}
    \]
for the fourth line. 
Therefore, \eqref{P_n-Taylor} leads to
    \begin{align}
    &n\left(\mathcal{P}_nf(x)-f(x)\right)-\mathcal{A}f(x) \nn\\
    &=n\E\left[\int_x^{T_n(x)/n}\left(\frac{1}{n}T_n(x)-t\right)
        \big(f''(t) - f''(x)\big) \, \dd t\right]=:n\E[I_n(x)]
        \label{P_n-A}
    \end{align}
for $ n \in \N $ and $ x \in [0, \infty) $.
Since $ f'' \in C_\infty([0, \infty))$
is uniformly continuous,
for $ \ve>0 $, we can find
a sufficiently small $ \delta>0 $ such that 
$ 0<|x-y|<\delta $ implies 
$ |f''(x)-f''(y)|<\ve $.
Hence, one has
    \begin{align}
        &\left|\E\left[I_n(x) \, : \, \left|\frac{1}{n}T_n(x)-x\right|<\delta\right]\right| \nn \\
        &\le \ve\E\left[\left|\int_x^{T_n(x)/n}\left(\frac{1}{n}T_n(x)-t\right) \, \dd t\right| \, 
        : \, \left|\frac{1}{n}T_n(x)-x\right|<\delta\right] \le \frac{\ve x}{2n}.
        \label{P_n-inside}
    \end{align}
On the other hand, it follows from 
the Schwarz inequality that 
    \begin{align}
        &\left|\E\left[I_n(x) \, : \, \left|\frac{1}{n}T_n(x)-x\right| \ge \delta\right]\right| \nn \\
        &\le 2\|f''\|_\infty 
        \E\left[\left|\int_x^{T_n(x)/n}\left(\frac{1}{n}T_n(x)-t\right) \, \dd t\right| \, 
        : \, \left|\frac{1}{n}T_n(x)-x\right| \ge \delta\right] \nn \\
        &\le 2\|f''\|_\infty 
        \E\left[\left|\int_x^{T_n(x)/n}\left(\frac{1}{n}T_n(x)-t\right) \, \dd t\right|^2\right]^{1/2}
        \mathbb{P}\left(
        \left|\frac{1}{n}T_n(x)-x\right| \ge \delta
        \right)^{1/2} \nn \\
        &\le \|f''\|_\infty 
        \E\left[\left(\frac{1}{n}T_n(x)-x\right)^4\right]^{1/2}
        \mathbb{P}\left(
        \left|\frac{1}{n}T_n(x)-x\right| \ge \delta
        \right)^{1/2}. 
        \label{P_n-outside}
    \end{align}
By using \eqref{Poisson-moments}, we see that 
    \begin{equation}\label{P_n-B}
    \E\left[\left(\frac{1}{n}T_n(x)-x\right)^4\right]^{1/2}
    =\left(\frac{3x^2}{n^2}+\frac{x}{n^3}\right)^{1/2}
    \le \frac{\sqrt{3}x}{n}
    +\frac{\sqrt{x}}{n\sqrt{n}}. 
    \end{equation}
Moreover, by applying 
\cite[Theorem 1]{C}, we have 
    \begin{equation}\label{P_n-C}
    \mathbb{P}\left(
        \left|\frac{1}{n}T_n(x)-x\right| \ge \delta
        \right)
    \le 2\exp\left(-\frac{n\delta^2}{2(x+\delta)}\right).
    \end{equation}
Therefore, by combining \eqref{P_n-A} with 
\eqref{P_n-inside}, \eqref{P_n-outside}, \eqref{P_n-B}, and \eqref{P_n-C},
we obtain 
    \[
    \begin{aligned}
    &w_{\alpha}(x)\left|n\left(\mathcal{P}_nf(x)-f(x)\right)
    -\mathcal{A}f(x)\right| \\
    &\le \frac{\ve }{2}xw_\alpha(x)+
    \sqrt{2}\|f''\|_\infty
    \left(\sqrt{3}xw_\alpha(x)
    +\frac{1}{\sqrt{n}}\sqrt{x}w_\alpha(x)\right)
    \exp\left(-\frac{n\delta^2}{2(x+\delta)}\right)
    \end{aligned}
    \]
for $ n \in \N $ and $ x \in [0, \infty). $
Here, we note that
    \[
    \sup_{x \in [0, \infty)}xw_\alpha(x)
    =\frac{\alpha-1}{\alpha}\left(\frac{1}{\alpha-1}\right)^{1/\alpha}, \quad 
    \sup_{x \in [0, \infty)}\sqrt{x}w_\alpha(x)
    =\frac{2\alpha-1}{2\alpha}\left(\frac{1}{2\alpha-1}\right)^{1/2\alpha}<\infty.
    \]
Furthermore, it follows from a direct calculus that the function 
$ xw_\alpha(x)\exp({-\frac{n\delta^2}{4(x+\delta)}}) $
has a unique maximizer $x_* \in (0, \infty)$. 
Therefore, we have 
    \[
    \begin{aligned}
    &\|n(\mathcal{P}_nf-f)
    -\mathcal{A}f\big\|_{\infty, w_\alpha} \\
    &\le \frac{\ve(\alpha-1)}{2\alpha}\left(\frac{1}{\alpha-1}\right)^{1/\alpha}\\
    &\hspace{1cm}+
    \sqrt{2}\|f''\|_\infty
    \left(\sqrt{3}x_*w_\alpha(x_*)
    \exp\left(-\frac{n\delta^2}{2(x_*+\delta)}\right)
    +\frac{2\alpha-1}{2\alpha\sqrt{n}}\left(\frac{1}{2\alpha-1}\right)^{1/2\alpha}\right)\\
    &\to \frac{\ve(\alpha-1)}{2\alpha}\left(\frac{1}{\alpha-1}\right)^{1/\alpha}
    \end{aligned}
    \]
as $n \to \infty$. Since $ \ve>0 $ is arbitrary, 
we obtain the desired convergence by letting $\ve \searrow 0$. 

Next, suppose that $ f'' $ is Lipschitz and $ \alpha>3/2 $. 
Then, it follows from \eqref{P_n-B} that  
    \begin{align}
    &\big|n\big(\mathcal{P}_nf(x)-f(x)\big)-\mathcal{A}f(x)\big| \nn\\
    &\le n\mathrm{Lip}(f'')
    \E\left[\int_{x \wedge T_n(x)/n}^{x \vee T_n(x)/n}\left|\frac{1}{n}T_n(x)-t\right|
        |t-x| \, \dd t\right] \nn \\
    &\le \frac{1}{6}n\mathrm{Lip}(f'')
    \E\left[ \Big|\frac{1}{n}T_n(x)-x\Big|^3\right] \nn \\
    &\le \frac{1}{6}n\mathrm{Lip}(f'')
    \E\left[ \Big|\frac{1}{n}T_n(x)-x\Big|^4\right]^{3/4} \nn\\
    &\le \frac{1}{6}n\mathrm{Lip}(f'')
    \left(\frac{3x^2}{n^2}+\frac{x}{n^3}\right)^{3/4}
    \le \frac{1}{6}\mathrm{Lip}(f'')
    \left(\frac{3^{3/4}x^{3/2}}{\sqrt{n}}+\frac{x^{3/4}}{\sqrt{n}}\right).\nn
    \end{align}
By noting 
    \[
    \begin{aligned}
    &\sup_{x \in [0, \infty)}(3^{3/4}x^{3/2}+x^{3/4})
    w_\alpha(x)\\
    &=\frac{3^{3/4}(2\alpha-3)}{2\alpha}\left(\frac{3}{2\alpha-3}\right)^{3/2\alpha}
    +\frac{4\alpha-3}{4\alpha}\left(\frac{3}{4\alpha-3}\right)^{3/4\alpha}<\infty, 
    \end{aligned}
    \]
we obtain 
    \[
    \|n(\mathcal{P}_nf-f)
    -\mathcal{A}f\big\|_{\infty, w_\alpha} 
    \le \frac{1}{6\sqrt{n}}M_\alpha\mathrm{Lip}(f''), \qquad n \in \N,
    \]
which is the desired estimate.  
\end{proof}

As a combination of Theorem \ref{Thm:Voronovskaya-Szasz-Mirakyan}, 
Trotter's approximation theorem and a technique demonstrated in \cite{AC}, 
we can give the proof of Theorem \ref{Thm:semigroup-conv-SM}. 
For more details on Trotter's approximation theorem, 
see Appendix A.

\begin{proof}[Proof of Theorem \ref{Thm:semigroup-conv-SM}]
Let $ \alpha>1 $.  Recall that 
$ C_\infty^2([0, \infty)) \subset D(\mathcal{A}) $ is dense in 
$ C_\infty([0, \infty)) $
by applying the (generalization of) Stone--Weierstrass theorem. 
Hence, it is also dense 
in $ C_\infty^{w_\alpha}([0, \infty)) $. 
Moreover, by virtue of the proof of \cite[Lemma 4.2]{AC2}, 
we can show that 
$ (I-\mathcal{A})(C_\infty^2([0, \infty))) $ is dense 
in $ C_\infty([0, \infty)) $ and 
so is in $ C_\infty^{w_\alpha}([0, \infty)) $. 

We now consider the linear operator 
$\mathcal{A}_*$ defined by 
    \[
    \mathcal{A}_*f
    :=\lim_{n \to \infty} n(\mathcal{P}_n f - f), 
    \qquad f \in D(\mathcal{A}_*),
    \]
where the domain $ D(\mathcal{A}_*) $
of $ \mathcal{A}_* $ is given by 
    \[
    D(\mathcal{A}_*)
    :=\Big\{f \in C_\infty^{w_\alpha}([0, \infty)) \, 
    \Big| \, \lim_{n \to \infty} n(\mathcal{P}_n f - f)
    \text{ exists in }C_\infty^{w_\alpha}([0, \infty))\Big\}.
    \]
Thanks to $ C_\infty^2([0, \infty)) \subset D(\mathcal{A}_*) $, we know that 
$ D(\mathcal{A}_*) $ is dense 
in $ C_\infty^{w_\alpha}([0, \infty)) $. 
Moreover, $ (I-\mathcal{A})(C_\infty^2([0, \infty))) $
coincides with $ (I-\mathcal{A}_*)(C_\infty^2([0, \infty))) $ and it is dense in $ C_\infty^{w_\alpha}([0, \infty)) $. 
Therefore, Trotter's approximation theorem 
(see Theorem \ref{Prop:Trotter}) implies that 
the closure $ (\overline{\mathcal{A}_*}, D(\overline{\mathcal{A}_*})) $ of $ (\mathcal{A}_*, D(\mathcal{A}_*)) $ generates a $C_0$-semigroup
$ (\mathsf{S}_t)_{t \ge 0} $ on $ C_\infty^{w_\alpha}([0, \infty)) $ and we obtain 
\[
    \lim_{n \to \infty}
    \|\mathcal{P}_n^{[nt]} f 
    - \mathsf{S}_t f\|_{\infty, w_\alpha}=0
    \]
for every $ f \in C_\infty^{w_\alpha}([0, \infty)) $ 
and $ t \ge 0 $. 
On the other hand, if $ f \in C_\infty^2([0, \infty)) $, 
we then see that the $ \overline{\mathcal{A}_*}f=\mathcal{A}f $
and therefore it turns out that 
    \[
    (I-\overline{\mathcal{A}_*})(C_\infty^2([0, \infty))=
    (I-\mathcal{A})(C_\infty^2([0, \infty))
    \]
is dense in $ C_\infty^2([0, \infty)) $. 
Particularly, the linear operators 
$ I-\overline{\mathcal{A}_*} $ and $ I-\mathcal{A} $
are invertible and $ C_\infty^2([0, \infty)) $
is a core for both $ \overline{\mathcal{A}_*} $ and $ \mathcal{A} $. Thus, we obtain 
$ D(\overline{\mathcal{A}_*})=D(\mathcal{A}) $
and $ \overline{\mathcal{A}_*}=\mathcal{A} $. 
This concludes $ {\sf{S}}_t={\sf{T}}_t $ for $ t \ge 0 $. 

The latter part is readily obtained from 
Proposition \ref{Prop:Trotter-rate}
once we put two semi-norms $ \varphi_n $ and $ \psi_n $ as 
    \[
    \varphi_n(f):=\|\mathcal{A}f\|_{\infty, w_\alpha}
    +\frac{1}{6\sqrt{n}}M_\alpha\mathrm{Lip}(f''), \quad 
    \psi_n(f):=\frac{1}{6\sqrt{n}}M_\alpha\mathrm{Lip}(f''), 
    \]
for $ f \in C_\infty^2([0, \infty)).  $
\end{proof}

In \cite{AC}, Altomare and Carbone dealt with 
a degenerate differential operator 
$ \mathcal{A}f(x)=a(x)f''(x) $ 
on the weighted function space 
$ C_\infty^{w_\alpha}([0, \infty)), \, \alpha \ge 1, $ and show that 
it generates a $C_0$-semigroup of positive linear operators, where the continuous function $ a(x) $
satisfies 
    \[
    \lim_{x \to 0+}a(x)f''(x)
    =\lim_{x \to \infty}w_\alpha(x)(a(x)f''(x))=0.
    \]
Moreover, they apply the result to deduce that 
the iterates of the (generalized) Sz\'asz--Mirakyan operator uniformly converges 
to the $C_0$-semigroup in a purely functional-analytic approach. 
However, our limit theorems together with 
its rate of convergence are based on not only 
a functional-analytic view but also a probabilistic one. 
Therefore, we come to capture an interesting 
characterization of the limiting semigroup 
in terms of a certain diffusion process, 
as in the next section.

\section{{\bf A probabilistic interpretation 
of Theorem \ref{Thm:semigroup-conv-SM}}}
\label{Section:probab-approach}

As is seen in \eqref{Bernstein-Markov chain},
the iterate of the Bernstein operator 
is represented as the expectation of a certain 
time-homogeneous Markov chain. 
Thanks to the representation, we obtain 
a characterization of the convergence of 
the iterates of the Bernstein operator 
in terms of the diffusion process given by 
the weak limit of the Markov chain 
(see Proposition \ref{Prop:KYZ-weak-conv}). 

We also give such a characterization of 
Theorem \ref{Thm:semigroup-conv-SM}
by using a certain discrete Markov chain constructed by 
the iterates of the Sz\'asz--Mirakyan operator. 
Let $ G_n : [0, \infty] \to [0, \infty]  $ 
be a random function defined by 
    $$
    G_n(x):=\frac{1}{n}T_n(x), \qquad x \in [0, \infty),
    $$
and $ \{G_n^k\}_{k=1}^\infty $ a sequence of 
independent copies of $ G_n $.
For $ k \in \N, $ we also define a random function 
$ H_n^k : [0, \infty) \to [0, \infty) $ by 
    $$
    H_n^k(x):=(G_n^k \circ G_n^{k-1} \circ \cdots \circ G_n^1)(x)
    , \qquad x \in [0, \infty).
    $$
Then, by the definition 
of $\mathcal{P}_n$, we see that 
    \begin{equation}
    \label{SM-Markov chain}
    \mathbb{E}[f(H_n^k(x))]
    =\mathcal{P}_n^kf(x), \qquad x \in [0, \infty). 
    \end{equation}
For every $ x \in [0, \infty), $
we observe that the sequence 
$ \{H_n^k(x)\}_{k=1}^\infty $ is a time-homogeneous Markov chain 
with values in $ \{i/n \, | \, i =0, 1, 2, \dots\} $
with the one-step transition probability 
    $$
    p\left(\frac{i}{n}, \frac{j}{n}\right)=
    \mathbb{P}\left(
    H_n^{k+1}(x)=\frac{j}{n} \, \Bigg| \, 
    H_n^k(x)=\frac{i}{n}\right)
    =e^{-i}\frac{i^j}{j!}, 
    \qquad i, j=0, 1, 2, \dots.
    $$
We note that, since $p(0, 0)=1$ and $ p(0, i/n)=0 $ for $i=1, 2, \dots$, 
the state 0 is an absorbing state of $ \{H_n^k(x)\}_{k=1}^\infty $. 
By using the representation \eqref{SM-Markov chain}, 
Theorem \ref{Thm:semigroup-conv-SM} reads 
    $$
    \lim_{n \to \infty}\sup_{x \in [0, \infty)}
    w_\alpha(x)
    \big|\E\big[f(H_n^{[nt]}(x))\big] - \mathsf{P}_tf(x)\big|=0
    $$
for $ \alpha>1, \, f \in C_\infty^{w_\alpha}([0, \infty))$ and $ t \ge 0$ . 
We now give another representation of 
the limiting semigroup $ (\mathsf{P}_t)_{t \ge 0} $ in terms of a certain diffusion process. 

\begin{tm}\label{Thm:SM-diffusion}
The $C_0$-semigroup $ (\mathsf{P}_t)_{t \ge 0} $
acting on $ C_\infty^{w_\alpha}([0, \infty)) $
is represented as 
    \[
    \mathsf{P}_tf(x)
    =\E\big[f(\mathbf{Y}_t(x))\big], \qquad 
    x \in [0, \infty), \, t \ge 0, 
    \]
where $ (\mathbf{Y}_t(x))_{t \ge 0} $
is a diffusion process which is the unique strong solution 
to the stochastic differential equation 
    \begin{equation}\label{SM-SDE}
    \dd\mathbf{Y}_t(x)=\sqrt{\mathbf{Y}_t(x)} \, \dd W_t, 
    \qquad \mathbf{Y}_0(x)=x \in [0, \infty),
    \end{equation}
with $ (W_t)_{t \ge 0} $ being a one-dimensional standard Brownian motion.
\end{tm}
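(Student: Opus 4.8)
The plan is to identify the $C_0$-semigroup $(\mathsf{P}_t)_{t\ge0}$ obtained in Theorem \ref{Thm:semigroup-conv-SM} with the transition semigroup of the diffusion $(\mathbf{Y}_t(x))_{t\ge0}$ solving \eqref{SM-SDE}, by matching generators on a common core. First I would address existence and uniqueness for \eqref{SM-SDE}. Pathwise uniqueness is not immediate because the diffusion coefficient $\sigma(y)=\sqrt{y}$ is only H\"older-$1/2$ on $[0,\infty)$ and vanishes at the boundary; however, since $\sigma$ is $1/2$-H\"older continuous, the Yamada--Watanabe criterion applies and yields pathwise uniqueness, hence a unique strong solution once a weak solution is known to exist. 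Existence of a weak solution in $[0,\infty)$ with $0$ an absorbing (in fact, inaccessible-from-the-left-unreachable) boundary follows from a standard Feller boundary-classification argument for the one-dimensional diffusion with speed measure and scale function computed from $a(y)=y/2$; alternatively one invokes that this is (a linear time-change of) the square of a Bessel-type process, or simply cites a standard existence theorem. The upshot of this step: $(\mathbf{Y}_t(x))_{t\ge0}$ is well defined, stays in $[0,\infty)$, and is absorbed at $0$, matching the behaviour of the Markov chain $\{H_n^k(x)\}$.

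Next I would set $\widetilde{\mathsf{P}}_tf(x):=\E[f(\mathbf{Y}_t(x))]$ for $f\in C_\infty^{w_\alpha}([0,\infty))$ and check that this is a $C_0$-semigroup on $C_\infty^{w_\alpha}([0,\infty))$. The semigroup property is the Markov property of the diffusion; contractivity on $C_\infty$ is clear; the key points are that $\widetilde{\mathsf{P}}_t$ maps $C_\infty^{w_\alpha}$ into itself and is strongly continuous there. For the weighted space one estimates $\E[w_\alpha(x)^{-1}]$-type quantities using that $\mathbf{Y}_t(x)$ is a nonnegative local martingale, hence a supermartingale, so $\E[\mathbf{Y}_t(x)]\le x$ and more generally moment bounds of the form $\E[\mathbf{Y}_t(x)^p]\le C_{p,t}(1+x^p)$ hold; this controls the growth and gives the required mapping and continuity properties, using dominated convergence and the Feller property of the diffusion. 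Then by It\^o's formula, for $f\in C_\infty^2([0,\infty))$ with the Wentzell-type boundary behaviour built into $D(\mathcal{A})$, the process $f(\mathbf{Y}_t(x))-f(x)-\int_0^t \tfrac{1}{2}\mathbf{Y}_s(x)f''(\mathbf{Y}_s(x))\,\dd s$ is a martingale, whence $\lim_{t\to0+}t^{-1}(\widetilde{\mathsf{P}}_tf-f)=\mathcal{A}f$ in $C_\infty^{w_\alpha}$; so the generator of $(\widetilde{\mathsf{P}}_t)_{t\ge0}$ extends $(\mathcal{A},C_\infty^2([0,\infty)))$.

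The final step is a uniqueness-of-generator argument. In the proof of Theorem \ref{Thm:semigroup-conv-SM} it was shown that $C_\infty^2([0,\infty))$ is a core for $(\mathcal{A},D(\mathcal{A}))$ and that $(I-\mathcal{A})(C_\infty^2([0,\infty)))$ is dense in $C_\infty^{w_\alpha}([0,\infty))$. Since both $(\mathsf{P}_t)$ and $(\widetilde{\mathsf{P}}_t)$ are $C_0$-semigroups whose generators agree with $\mathcal{A}$ on this core, and since a $C_0$-semigroup is uniquely determined by the restriction of its generator to a core, the Hille--Yosida/Trotter uniqueness (equivalently: $I-\mathcal{A}$ with this core has dense range, so its closure is the full generator of each semigroup) gives $\mathsf{P}_t=\widetilde{\mathsf{P}}_t$ for all $t\ge0$. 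I expect the main obstacle to be the boundary analysis: verifying carefully that the Wentzell-type condition in \eqref{generator-SM-domain} is exactly the one realized by the absorbed diffusion at $0$ (so that no spurious boundary terms appear in the It\^o/Dynkin computation and the two domains genuinely coincide), and handling the degenerate, merely H\"older coefficient $\sqrt{y}$ in both the SDE well-posedness and the weighted-space estimates. Everything else is an assembly of Yamada--Watanabe, It\^o's formula, supermartingale moment bounds, and the core argument already present in the paper.
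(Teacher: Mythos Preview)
Your argument is correct in outline, but it takes a genuinely different route from the paper's. You identify $(\mathsf{P}_t)$ with the diffusion semigroup by a direct generator-matching: set $\widetilde{\mathsf{P}}_tf(x)=\E[f(\mathbf{Y}_t(x))]$, compute its generator on $C_\infty^2([0,\infty))$ via It\^o's formula to see that it equals $\mathcal{A}$, and then invoke the core property established in the proof of Theorem~\ref{Thm:semigroup-conv-SM} together with Hille--Yosida uniqueness to conclude $\mathsf{P}_t=\widetilde{\mathsf{P}}_t$. The paper, by contrast, does not touch It\^o's formula at all. It exploits the Markov chain $\{H_n^k(x)\}$ as a \emph{common approximation}: on one hand $\mathcal{P}_n^{[nt]}f=\E[f(H_n^{[nt]}(x))]\to \mathsf{P}_tf$ by Theorem~\ref{Thm:semigroup-conv-SM}; on the other hand, the conditional moment computations \eqref{MC-1}--\eqref{MC-3} feed into the Stroock--Varadhan convergence criterion \cite[Theorem~11.2.3]{SV} to show $(H_n^{[nt]}(x))_{t\ge0}\Rightarrow(\mathbf{Y}_t(x))_{t\ge0}$ weakly, whence $\E[f(H_n^{[nt]}(x))]\to\E[f(\mathbf{Y}_t(x))]$ for $f\in C_\infty([0,\infty))$. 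Matching the two limits gives the identification on $C_\infty$, and density pushes it to $C_\infty^{w_\alpha}$.

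What each approach buys: the paper's route is in keeping with its probabilistic theme and delivers, as a by-product, the functional weak convergence of the discrete chain to the diffusion---which is of independent interest and is exactly the analogue of Proposition~\ref{Prop:KYZ-weak-conv}. It also sidesteps the need to verify directly that $\widetilde{\mathsf{P}}_t$ is a strongly continuous semigroup on the weighted space. Your route is more self-contained analytically (Yamada--Watanabe, It\^o, core uniqueness) and does not rely on the external Stroock--Varadhan machinery; the price is the extra work you flag---checking that $\widetilde{\mathsf{P}}_t$ preserves $C_\infty^{w_\alpha}$, is strongly continuous there, and that the It\^o computation yields convergence of the difference quotient in the \emph{weighted} norm, all of which require the moment estimates you sketch.
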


\begin{proof}
We first verify that the linear operator $\mathcal{A}$ 
given by \eqref{generator-SM} satisfies 
the  {\it positive maximal principle}, 
that is, for $ f \in C_\infty^2([0, \infty)) $, 
the condition $ f(x_0)=\sup_{x \in [0, \infty)}f(x) \ge 0 $ 
for some $ x_0 \in [0, \infty) $ 
implies $ \mathcal{A}f(x_0)= (x_0/2)f''(x_0) \le 0 $.
Hence, by applying the Hille--Yosida theorem 
(cf.~\cite[Theorem 19.11]{Kallen}), 
we know that the closure of 
$ \big(\mathcal{A}, C_\infty^2([0, \infty))\big) $
generates a unique Feller semigroup $ (\mathsf{P}_t)_{t \ge 0} $
on $ C_\infty([0, \infty)) $. 

On the other hand, we easily have
    \begin{align}
    n\mathbb{E}[(H_n^{k+1}(x)-H_n^{k}(x))^2 \, 
    | \, H_n^{k}(x)=y]
    &=n\mathbb{E}[(G_n(y)-y)^2]=y,  
    \label{MC-1}\\
    n\mathbb{E}[(H_n^{k+1}(x)-H_n^{k}(x)) \, 
    | \, H_n^{k}(x)=y]
    &=n\mathbb{E}[(G_n(y)-y)]=0. 
    \label{MC-2}
    \end{align}
Furthermore, for $\ve>0$, $ R>0 $ and $ 0 \le y<R $, one has
    \begin{align}\label{MC-3}
    &\mathbb{P}(|H_n^{k+1}(x)-H_n^{k}(x)|>\ve \, | \, H_n^k(x)=y)\nn\\
    &=\mathbb{P}(|G_n(y)-y|>\ve) 
    \le 2\exp\left(-\frac{n\delta^2}{2(y+\delta)}\right)
    < 2\exp\left(-\frac{n\delta^2}{2(R+\delta)}\right)
    \end{align}
by using \eqref{P_n-C}. 
Hence, it follows from \eqref{MC-1}, \eqref{MC-2} and 
\eqref{MC-3} that 
$ (H_n^{[nt]}(x))_{t \ge 0}, \, n=1, 2, \dots, $
converges weakly to the diffusion process $ ({\bf Y}_t(x))_{t \ge 0} $
which solves \eqref{SM-SDE} as $ n\to \infty $. 
Here, we have applied the convergence criteria given in 
\cite[Theorem 11.2.3]{SV}. 
In particular, we conclude that 
    \[
    \lim_{n \to \infty}\sup_{x \in [0, \infty)}
    \big|\E\big[f(H_n^{[nt]}(x))\big] - \E\big[f({\bf Y}_t(x))\big]\big|=0
    \]
for $ f \in C_\infty([0, \infty)), \, t \ge 0, $ and 
    \begin{equation}\label{MC-4}
    {\sf P}_tf(x)=\E\big[f({\bf Y}_t(x))\big], 
    \qquad f \in C_\infty([0, \infty)), \, t \ge 0, \, x \in [0, \infty). 
    \end{equation}
Since $ C_\infty([0, \infty)) $ is dense in 
$ C_\infty^{w_\alpha}([0, \infty)) $ and 
each $ {\sf P}_t, \, t \ge 0, $ is bounded, 
the representation \eqref{MC-4} 
can be extended to $ C_\infty^{w_\alpha}([0, \infty)) $ as well. 
\end{proof}

\section{{\bf Conclusions and further directions}}
\label{Section:Conclusion}

Throughout the present paper, we have discussed 
limit theorems for the iterates of the
Sz\'asz--Mirakyan operator  $ \mathcal{P}_n $ 
and have shown that the iterates of
$ \mathcal{P}_n $ uniformly converges to 
the diffusion semigroup generated
by a second order degenerate differential operator. 
In fact, there are a few papers discussing limit theorems 
for the iterates of $ \mathcal{P}_n $ being
essentially same as our main theorems (see e.g., \cite{AC}).
Nonetheless, we cannot find any references in which 
the explicit characterization of the limiting semigroup $ ({\sf P}_t)_{t \ge 0} $
in terms of a diffusion process is obtained. 
The present paper makes a significant contribution 
to probability theory as well as approximation theory 
in that we find a stochastic phenomenon
behind a positive linear operator 
approximating certain continuous functions on the half line.  

As pointed out in \cite{KZ}, 
a positive linear operator represented as the expectation of 
a certain random variable like \eqref{Bernstein op} 
and \eqref{Szasz-Mirakyan} may uniformly approximate 
certain continuous functions. 
Indeed, yet another example of such a linear operator was introduced
in \cite{Baskakov}. For $n \in \N$,  a linear operator $V_n$
acting on $ C([0, \infty)) $ defined by 
    \[
    V_n f(x):=\sum_{k=0}^\infty \binom{n+k-1}{k}\frac{x^k}{(x+1)^{n+k}}
    f\left(\frac{k}{n}\right), \qquad f \in C([0, \infty)), \, x \in [0, \infty),
    \]
is called the {\it Baskakov operator}. As is easily seen, this operator is 
defined in terms of the negative binomial distribution. 
We also refer to \cite{Becker} for the approximation property of $ V_n $. 
By heuristic calculation, we observe that the linear operator given by
$ \mathcal{A}f(x)=(x(x+1)/2)f''(x) $ appears as the limit of 
$ \{n(V_n-I)\}_{n=1}^\infty $. 
We then expect a new example of limit theorems 
which captures another kind of diffusion process as the limit. 
Moreover, by highly generalizing these linear operators, 
we can define positive linear operators approximating 
certain class of continuous functions on a locally compact $ E \subset \R $ 
associated with probability measures on $E$. 
The approximating properties and the convergences of the iterates 
of the generalized operators to some diffusion semigroups 
from probabilistic perspectives are to be 
discussed in the forthcoming paper. 

On the other hand, some papers concern with multidimensional generalizations 
of the classical Bernstein operator and discuss their limit theorems, 
to say nothing of the approximating properties. 
See e.g., \cite[Theorem 2.3]{CT} for a limit theorem for iterates of the
multidimensional Bernstein operator on a simplex.
Despite such developments, as far as we know, 
multidimensional generalizations have not been discussed sufficiently 
except for the case of the Bernstein operator. 
Therefore, it should be an interesting problem to define 
multidimensional generalizations of existing positive linear operators 
such as Sz\'asz--Mirakyan and Baskakov operators and 
to investigate their fundamental properties. 

Furthermore, it is also intriguing to consider the infinite-dimensional generalizations. 
In fact, there exists an infinite-dimensional analogue of 
the multidimensional Wright--Fisher diffusion, 
which is known as the (measure-valued) {\it Fleming--Viot process}. 
See e.g., \cite{EK1} for more details. 
Since the Wright--Fisher diffusion is viewed as the limit of the iterates of 
the Bernstein operator, we expect that there is an ``operator'' on the space of 
probability measures whose ``iterates'' converges to the diffusion semigroup
corresponding to the Fleming--Viot process in some sense. 
If we find such  ``operators'' in several infinite-dimensional settings, 
we can also construct several measure-valued diffusion processes 
through some limit theorems for their iterates.

\begin{appendix}
\label{app}

\section{{\bf Trotter's approximation theorem and 
its rate of convergence}}

Trotter's approximation theorem provides a sufficient condition that 
the iterates of a bounded linear operator 
acting on a Banach space converges to a $ C_0 $-semigroup. 
We give the statement of Trotter's approximation theorem 
when the iteration of a linear operator enjoys the contraction property.

\begin{pr}[cf.~{\cite[Theorem 5.1]{Trotter}}, {\cite[Theorem 2.13]{Kurtz}}]
\label{Prop:Trotter}
Let $ (\mathcal{U}, \|\cdot\|_{\mathcal{U}}) $ be a 
Banach space. Suppose that a sequence $ \{T_n\}_{n=1}^\infty $ 
of bounded linear operators on $ \mathcal{U} $ satisfies that 
$ \|T_n\| \le 1 $ for $ n \in \N $. 
Put $ \mathcal{A}_n:=n(T_n-I), \, n \in \N. $
We define a linear operator $ \mathcal{A} $ 
by the closure of the limit of $ \mathcal{A}_n $.  
If the domain $ D(\mathcal{A}) $ is dense in $ \mathcal{U} $ and 
the range of $ \lambda - \mathcal{A} $ is dense 
in $ \mathcal{U} $ for some $ \lambda>0 $, 
then  there exists a 
$ C_0 $-semigroup $ (\mathsf{T}_t)_{t \ge 0} $
acting on $ \mathcal{U} $ 
satisfying $\|\mathsf{T}_t\| \le 1, \, t \ge 0$, and 
    \begin{equation}\label{limit-Trotter}
    \lim_{n \to \infty}
    \|T_n^{[nt]}f - \mathsf{T}_tf\|_{\mathcal{U}}=0,
    \qquad t \ge 0, \, f \in \mathcal{U}. 
    \end{equation}
\end{pr}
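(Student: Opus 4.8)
The plan is to run the classical Hille--Yosida/Trotter--Kato machinery, the only genuinely new ingredient being the comparison of the iterates $T_n^{[nt]}$ with the ``Poissonized'' semigroups $e^{t\mathcal{A}_n}=e^{nt(T_n-I)}$. First I would note that each $\mathcal{A}_n=n(T_n-I)$ is bounded and, writing $\lambda-\mathcal{A}_n=(\lambda+n)\bigl(I-\tfrac{n}{\lambda+n}T_n\bigr)$ for $\lambda>0$, the Neumann series (legitimate since $\tfrac{n}{\lambda+n}\|T_n\|<1$) gives $\|(\lambda-\mathcal{A}_n)^{-1}\|\le 1/\lambda$, equivalently $\|(\lambda-\mathcal{A}_n)f\|\ge\lambda\|f\|$ for all $f\in\mathcal{U}$. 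Passing to the limit in $n$ on the subspace where $\mathcal{A}_0f:=\lim_n\mathcal{A}_nf$ exists shows $\mathcal{A}_0$ is dissipative, hence closable, and its closure $\mathcal{A}$ still satisfies $\|(\lambda-\mathcal{A})f\|\ge\lambda\|f\|$ on $D(\mathcal{A})$. Thus $(\lambda-\mathcal{A})^{-1}$ is bounded on its range, so that range is closed; since by hypothesis it is dense for some $\lambda_0>0$ it equals $\mathcal{U}$, and the standard bootstrap (the set of $\lambda>0$ with $\lambda-\mathcal{A}$ surjective is open and closed in $(0,\infty)$ thanks to the uniform bound $1/\lambda$) extends this to every $\lambda>0$. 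Together with density of $D(\mathcal{A})$, the Hille--Yosida theorem then produces a contraction $C_0$-semigroup $(\mathsf{T}_t)_{t\ge0}$ generated by $\mathcal{A}$.

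Next I would establish strong resolvent convergence. For $f\in D(\mathcal{A}_0)$ set $g_n:=(\lambda-\mathcal{A}_n)f$ and $g:=(\lambda-\mathcal{A})f$, so $g_n\to g$ and $(\lambda-\mathcal{A}_n)^{-1}g-f=(\lambda-\mathcal{A}_n)^{-1}(g-g_n)$ has norm at most $\|g-g_n\|/\lambda\to0$; hence $(\lambda-\mathcal{A}_n)^{-1}g\to(\lambda-\mathcal{A})^{-1}g$. Such $g$ form a dense subset of $\mathcal{U}$ (because $\mathcal{A}=\overline{\mathcal{A}_0}$ and $(\lambda-\mathcal{A})D(\mathcal{A})=\mathcal{U}$), so the uniform bound $\|(\lambda-\mathcal{A}_n)^{-1}\|\le1/\lambda$ upgrades this to $(\lambda-\mathcal{A}_n)^{-1}\to(\lambda-\mathcal{A})^{-1}$ strongly on all of $\mathcal{U}$. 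The Trotter--Kato approximation theorem then yields $e^{t\mathcal{A}_n}f\to\mathsf{T}_tf$ for every $f\in\mathcal{U}$, uniformly for $t$ in compact intervals.

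It remains to compare $T_n^{[nt]}$ with $e^{t\mathcal{A}_n}=e^{nt(T_n-I)}=e^{-nt}\sum_{k\ge0}\tfrac{(nt)^k}{k!}T_n^k$. With $m:=[nt]$, the telescoping estimate $\|T_n^kf-T_n^mf\|\le\tfrac{|k-m|}{n}\|\mathcal{A}_nf\|$ (valid since $(T_n-I)f=\tfrac1n\mathcal{A}_nf$ and $\|T_n\|\le1$) gives, for $f\in D(\mathcal{A}_0)$,
\[
\|e^{t\mathcal{A}_n}f-T_n^{[nt]}f\|\le\frac{\|\mathcal{A}_nf\|}{n}\,\E\bigl[|N-m|\bigr]\le\frac{\|\mathcal{A}_nf\|}{n}\sqrt{nt+1},
\]
where $N$ is Poisson with mean $nt$ and the last step is Cauchy--Schwarz together with $\mathrm{Var}(N)=nt$ and $(nt-m)^2\le1$. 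Since the sequence $\|\mathcal{A}_nf\|$ converges (so is bounded) and $\sqrt{nt+1}/n\to0$, the right-hand side vanishes; the estimate then extends to all $f\in\mathcal{U}$ because $\|T_n^{[nt]}\|\le1$, $\|e^{t\mathcal{A}_n}\|\le1$ and $D(\mathcal{A}_0)$ is dense. Adding this to the conclusion of the previous paragraph by the triangle inequality yields \eqref{limit-Trotter}.

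The delicate point is not the Poissonization step, which is elementary, but the passage from the bare hypothesis that $\mathcal{A}$ is merely the closure of $\lim_n\mathcal{A}_n$ to an honest generator: one must extract closability and the resolvent bound from dissipativity, then use density of the range of $\lambda-\mathcal{A}$ to close that range and invoke Hille--Yosida, and finally verify that resolvent convergence known only on a dense set — not a priori on all of $\mathcal{U}$ — suffices, which is exactly what the uniform contraction bound buys and what the Trotter--Kato theorem packages.
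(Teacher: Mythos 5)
Your proposal is correct, and it is essentially the classical argument behind the result, which the paper itself does not prove but simply quotes from Trotter \cite{Trotter} and Kurtz \cite{Kurtz}: dissipativity of the $\mathcal{A}_n=n(T_n-I)$ passed to the limit plus the range condition yields the contraction semigroup via Hille--Yosida/Lumer--Phillips, strong resolvent convergence on a dense set upgraded by the uniform bound $\|(\lambda-\mathcal{A}_n)^{-1}\|\le 1/\lambda$ gives $e^{t\mathcal{A}_n}f\to\mathsf{T}_tf$ by Trotter--Kato, and the Poissonization estimate $\|e^{t\mathcal{A}_n}f-T_n^{[nt]}f\|\le n^{-1}\|\mathcal{A}_nf\|\sqrt{nt+1}$ closes the gap, with density and contractivity extending everything to all of $\mathcal{U}$. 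I see no gap of substance (closability of the limiting operator, which you deduce from dissipativity and density, is in any case implicit in the statement's assumption that the closure $\mathcal{A}$ exists).
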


On the other hand, as is easily seen,
Proposition \ref{Prop:Trotter} does not imply 
any quantitative estimates of \eqref{limit-Trotter}. 
Campiti and Tacelli established in \cite{CT} 
a refinement of Proposition \ref{Prop:Trotter} by giving the rate of convergence 
of \eqref{limit-Trotter}.

\begin{pr}[cf.~{\cite[Theorem 1.1]{CT}, see also \cite{CTa}}]
\label{Prop:Trotter-rate}
Suppose that  $ \mathcal{U} $, $ \{T_n\}_{n=1}^\infty $, 
$ \{\mathcal{A}_n\}_{n=1}^\infty $ and $ (\mathsf{T}_t)_{t \ge 0} $
are as in Proposition {\rm \ref{Prop:Trotter}}.
Let $ \mathcal{D} $ be a dense subspace of $ \mathcal{U} $. 
We assume that
    \[
    \|\mathcal{A}_n f\|_{\mathcal{U}} \le \varphi_n(f), \qquad 
    \|\mathcal{A}_n f - \mathcal{A}f\|_{\mathcal{U}} \le \psi_n(f), 
    \qquad f \in \mathcal{D},
    \]
where $ \varphi_n, \psi_n : \mathcal{D} \to [0, \infty) $
are semi-norms with $ \psi_n(f) \to 0 $ as $ n \to \infty $
for $ f \in \mathcal{D} $.
Then, for  $ t \ge 0 $ and 
$ f \in \{g \in \mathcal{D} \, 
| \, \mathsf{T}_t g \in \mathcal{D}, \, t \ge 0\} $, we have 
    \begin{equation}\label{limit-Trotter2}
    \|T_n^{[nt]}f - \mathsf{T}_tf\|_{\mathcal{U}}
    =\sqrt{\frac{t}{n}}\varphi_n(f)+\frac{1}{n}\varphi_n(f)
    +\int_0^t \psi_n(\mathsf{T}_sf) \, \dd s,
    \qquad n \in \N. 
    \end{equation}
\end{pr}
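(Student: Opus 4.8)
The plan is to factor the comparison between the discrete iterate $T_n^{[nt]}$ and the limiting semigroup $\mathsf{T}_t$ through the auxiliary continuous semigroup $(e^{s\mathcal{A}_n})_{s \ge 0}$ generated by the bounded operator $\mathcal{A}_n = n(T_n - I)$, and to estimate the two resulting pieces separately. Since $\|T_n\|_{\mathcal{U}} \le 1$, one has $\|e^{s\mathcal{A}_n}\|_{\mathcal{U}} = \|e^{-ns}e^{nsT_n}\|_{\mathcal{U}} \le e^{-ns}e^{ns} = 1$, so both the auxiliary and the limiting semigroups are contractions; this is what keeps every error term bounded by a seminorm of its argument alone. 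Writing $m := [nt]$ and inserting $e^{t\mathcal{A}_n}f$, the triangle inequality gives $\|T_n^m f - \mathsf{T}_t f\|_{\mathcal{U}} \le \|T_n^m f - e^{t\mathcal{A}_n}f\|_{\mathcal{U}} + \|e^{t\mathcal{A}_n}f - \mathsf{T}_t f\|_{\mathcal{U}}$, and I would bound the first summand by $(\sqrt{t/n} + 1/n)\varphi_n(f)$ and the second by $\int_0^t \psi_n(\mathsf{T}_s f)\,\dd s$.

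For the first summand I would exploit the probabilistic representation $e^{t\mathcal{A}_n} = e^{-nt}\sum_{k=0}^\infty \frac{(nt)^k}{k!}T_n^k = \E[T_n^{N}]$, where $N$ is a Poisson random variable with mean $nt$; the series converges in operator norm because $\|T_n\|_{\mathcal{U}} \le 1$. Hence $e^{t\mathcal{A}_n}f - T_n^m f = \E[(T_n^N - T_n^m)f]$. The key elementary estimate is that, for integers $k \ge l$, telescoping $T_n^k - T_n^l$ together with $T_n - I = \tfrac1n \mathcal{A}_n$ and the contractivity $\|T_n^j\|_{\mathcal{U}} \le 1$ yields $\|(T_n^k - T_n^l)f\|_{\mathcal{U}} \le \frac{|k-l|}{n}\|\mathcal{A}_n f\|_{\mathcal{U}} \le \frac{|k-l|}{n}\varphi_n(f)$. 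Taking expectations and using $\E|N - m| \le \E|N - nt| + |nt - m| \le \sqrt{\Var(N)} + 1 = \sqrt{nt} + 1$ (the first bound by Cauchy--Schwarz) produces exactly $\|T_n^m f - e^{t\mathcal{A}_n}f\|_{\mathcal{U}} \le \frac{1}{n}(\sqrt{nt} + 1)\varphi_n(f) = (\sqrt{t/n} + 1/n)\varphi_n(f)$.

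For the second summand I would use the standard variation-of-constants identity between two semigroups. Since $f \in \mathcal{D} \subseteq D(\mathcal{A})$ and $\mathsf{T}_s f \in \mathcal{D} \subseteq D(\mathcal{A})$ by the hypothesis on $f$, the map $s \mapsto e^{(t-s)\mathcal{A}_n}\mathsf{T}_s f$ is differentiable on $[0,t]$ with derivative $e^{(t-s)\mathcal{A}_n}(\mathcal{A} - \mathcal{A}_n)\mathsf{T}_s f$, the two generator terms arising from differentiating each factor (here $\mathcal{A}_n$ commutes with its own semigroup). Integrating from $0$ to $t$ gives $\mathsf{T}_t f - e^{t\mathcal{A}_n}f = \int_0^t e^{(t-s)\mathcal{A}_n}(\mathcal{A} - \mathcal{A}_n)\mathsf{T}_s f\,\dd s$, and bounding the integrand by $\|e^{(t-s)\mathcal{A}_n}\|_{\mathcal{U}}\,\|(\mathcal{A}_n - \mathcal{A})\mathsf{T}_s f\|_{\mathcal{U}} \le \psi_n(\mathsf{T}_s f)$ delivers the integral term. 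Summing the two bounds yields the stated estimate.

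The hard part will be the rigorous justification of the differentiation step for the second summand: one must verify that $s \mapsto e^{(t-s)\mathcal{A}_n}\mathsf{T}_s f$ is genuinely $C^1$, i.e. that the product rule holds for the composition of the two strongly continuous semigroups acting on $f \in D(\mathcal{A})$, and that the Bochner integral of $s \mapsto e^{(t-s)\mathcal{A}_n}(\mathcal{A} - \mathcal{A}_n)\mathsf{T}_s f$ is well defined (strong continuity of $s \mapsto \mathsf{T}_s f$ and continuity of the seminorm-controlled integrand). The remaining care is bookkeeping: ensuring $\mathsf{T}_s f$ stays in $\mathcal{D}$ so that $\varphi_n$ and $\psi_n$ may legitimately be applied, which is exactly guaranteed by restricting $f$ to $\{g \in \mathcal{D} : \mathsf{T}_t g \in \mathcal{D},\ t \ge 0\}$.
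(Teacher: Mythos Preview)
The paper does not give its own proof of this proposition; it is stated in Appendix~A as a known result and simply cited from \cite{CT,CTa}. Your argument is correct and is, in fact, essentially the proof one finds in Campiti--Tacelli: the decomposition through the auxiliary contraction semigroup $e^{t\mathcal{A}_n}$, the Poisson representation $e^{t\mathcal{A}_n}=\E[T_n^{N}]$ together with the telescoping bound $\|(T_n^k-T_n^l)f\|_{\mathcal{U}}\le \tfrac{|k-l|}{n}\varphi_n(f)$ for the first piece, and the variation-of-constants identity for the second piece. Note only that the displayed ``$=$'' in \eqref{limit-Trotter2} is a typo for ``$\le$''; your proof correctly yields the inequality, which is also what is actually used in the proof of Theorem~\ref{Thm:semigroup-conv-SM}.
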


Indeed, we have used Proposition \ref{Prop:Trotter-rate}
in order to deduce the rate of convergence of the 
iterates of the Sz\'asz--Mirakyan operator (see Theorem \ref{Thm:semigroup-conv-SM}).

\section{{\bf Korovkin-type theorems}}
\label{Section:Korovkin}

It is well-known that the Korovkin-type theorems
provide a quite powerful sufficient condition to 
deduce that a sequence of positive linear operators acting 
on some function spaces converges strongly to the identity operator. 
Korovkin's first theorem, which was first discovered by Korovkin himself in \cite{Korovkin}, 
is stated as follows:

\begin{pr}[Korovkin's first theorem]
\label{Korovkin}
Let $ e_0(x) \equiv 1 $, $ e_1(x)=x $ and $ e_2(x)=x^2 $
for $ x \in [0, 1] $. 
If a sequence $ \{L_n\}_{n=1}^\infty $ of positive linear operators 
acting on $ C([0, 1]) $ satisfies
    \[
    \lim_{n \to \infty}
    \|L_ne_i - e_i\|_\infty=0, \qquad i=0, 1, 2,
    \]
then, it holds that 
    \[
    \lim_{n \to \infty}
    \|L_nf - f\|_\infty=0, \qquad f \in C([0, 1]). 
    \]
\end{pr}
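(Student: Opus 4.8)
The plan is to run the classical Korovkin argument: reduce from the three test functions $e_0,e_1,e_2$ to an arbitrary $f\in C([0,1])$ via a quadratic majorization of the modulus of continuity, and then exploit positivity and linearity of each $L_n$. First I would fix $f\in C([0,1])$ and $\varepsilon>0$. Since $[0,1]$ is compact, $f$ is uniformly continuous, so there is $\delta>0$ with $|f(x)-f(y)|<\varepsilon$ whenever $|x-y|<\delta$. For $|x-y|\ge\delta$ the crude bound $|f(x)-f(y)|\le 2\|f\|_\infty\le \tfrac{2\|f\|_\infty}{\delta^2}(x-y)^2$ holds, and combining the two regimes yields, for all $x,y\in[0,1]$,
\[
-\varepsilon-\frac{2\|f\|_\infty}{\delta^2}(x-y)^2\le f(x)-f(y)\le \varepsilon+\frac{2\|f\|_\infty}{\delta^2}(x-y)^2.
\]

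Next I would freeze $x\in[0,1]$ and read these as inequalities between functions of the variable $y$, noting that $y\mapsto (x-y)^2=x^2 e_0(y)-2x\,e_1(y)+e_2(y)$. Applying the positive linear operator $L_n$ in the variable $y$ and then evaluating at $y=x$ (monotonicity: $g\le h$ on $[0,1]$ and positivity of $L_n$ give $L_n g\le L_n h$ pointwise) produces
\[
|L_n f(x)-f(x)L_n e_0(x)|\le \varepsilon\,L_n e_0(x)+\frac{2\|f\|_\infty}{\delta^2}\,\Phi_n(x),
\]
where $\Phi_n(x):=L_n e_2(x)-2x\,L_n e_1(x)+x^2 L_n e_0(x)$. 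Rewriting $\Phi_n(x)=(L_n e_2(x)-e_2(x))-2x(L_n e_1(x)-e_1(x))+x^2(L_n e_0(x)-e_0(x))$ and using $|x|\le 1$, the hypothesis gives $\|\Phi_n\|_\infty\le \|L_n e_2-e_2\|_\infty+2\|L_n e_1-e_1\|_\infty+\|L_n e_0-e_0\|_\infty\to 0$; likewise $\|L_n e_0\|_\infty\le 1+\|L_n e_0-e_0\|_\infty$ stays bounded.

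Finally I would use the triangle inequality $|L_n f(x)-f(x)|\le |L_n f(x)-f(x)L_n e_0(x)|+\|f\|_\infty|L_n e_0(x)-1|$, take the supremum over $x\in[0,1]$, and let $n\to\infty$, obtaining $\limsup_{n\to\infty}\|L_n f-f\|_\infty\le \varepsilon$; since $\varepsilon>0$ is arbitrary, this proves the claim. I do not expect a genuine obstacle here: the only point requiring care is the bookkeeping in applying $L_n$ — the operator acts on the $y$-variable while $x$ serves simultaneously as a parameter inside $(x-y)^2$ and as the final evaluation point — and beyond that the estimates are routine.
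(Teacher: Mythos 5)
Your argument is correct and complete: the quadratic majorization $|f(x)-f(y)|\le \varepsilon+\tfrac{2\|f\|_\infty}{\delta^2}(x-y)^2$, the expansion $(x-y)^2=x^2e_0(y)-2x\,e_1(y)+e_2(y)$, monotonicity of the positive operators applied in the $y$-variable with $x$ frozen, and the final triangle inequality with the uniform bound on $\|L_ne_0\|_\infty$ are exactly the ingredients needed, and the bookkeeping you flag (operator in $y$, evaluation at $y=x$) is handled correctly. Note that the paper does not prove this proposition at all — it is quoted from Korovkin's original work \cite{Korovkin} as background in the appendix — so there is nothing to compare against; what you have written is the standard classical proof of Korovkin's first theorem, and it is sound.
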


So far, a number of generalizations of 
Proposition \ref{Korovkin} have been established 
in various settings. We refer to
\cite{Altomare} and \cite{AC3}
for good surveys of this topic. 
In the sequel, $ (\mathfrak{X}, \|\cdot\|_{\mathfrak{X}}) $ is used 
in order to represent  Banach spaces of some continuous functions on
a locally compact space $E \subset \R$. 
We give a general formulation of the Korovkin-type theorem.

\begin{df}[Korovkin set]
A subset $\mathcal{K}$ of $\mathfrak{X}$ is called a Korovkin set of $\mathfrak{X}$
if for every sequence $ \{L_n\}_{n=1}^\infty $ of positive linear operators
acting on $\mathfrak{X}$ satisfying $\sup_{n \in \N}\|L_n\|<\infty$ and 
    \[
    \lim_{n \to \infty}\|L_nf - f\|_{\mathfrak{X}}=0, \qquad f \in \mathcal{K},
    \]
    then, it holds that 
    \[
    \lim_{n \to \infty}\|L_nf - f\|_{\mathfrak{X}}=0, \qquad f \in \mathfrak{X}.
    \]
\end{df}

Namely, Korovkin-type results aim to find out which functions form Korovkin sets of 
a given function space.
Note that, in other words, Korovkin's first theorem asserts that 
the set $ \{e_0, e_1, e_2\} \subset C([0, 1]) $ is a Korovkin set of $ C([0, 1]) $.
We next give several examples of Korovkin sets of $ C_\infty([0, \infty)) $.

\begin{pr}[cf.~{\cite[Corollary 6.7]{Altomare}}]
\label{Korovkin-1}
Suppose that  $ 0<\lambda_1<\lambda_2<\lambda_3 $. Then,
$\{f_{\lambda_1}, f_{\lambda_2}, f_{\lambda_3}\}$ is a 
Korovkin set of $  C_\infty([0, \infty)) $, 
where $f_{\lambda_k}(x):=\exp(-\lambda_k x)$ for $k=1, 2, 3$.
\end{pr}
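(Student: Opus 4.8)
The plan is to verify that $\{f_{\lambda_1},f_{\lambda_2},f_{\lambda_3}\}$ fulfils the hypotheses of the standard Korovkin machinery on the locally compact space $[0,\infty)$: for each finite point $x_0\in[0,\infty)$ and for the point at infinity one has to exhibit a nonnegative ``peak function'' inside the linear span $V:=\mathrm{span}\{f_{\lambda_1},f_{\lambda_2},f_{\lambda_3}\}$. At infinity this role is played by $f_{\lambda_1}$ itself, which lies in $C_\infty([0,\infty))$, is strictly positive on $[0,\infty)$, and vanishes at infinity. The substantive point is the construction at a finite $x_0$.

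For a finite $x_0$ I would seek $P_{x_0}=\sum_{k=1}^{3}c_k f_{\lambda_k}\in V$ with $P_{x_0}(x_0)=P_{x_0}'(x_0)=0$. Setting $d_k:=c_k e^{-\lambda_k x_0}$, these two conditions are the homogeneous system $\sum_k d_k=0$, $\sum_k\lambda_k d_k=0$, whose one-dimensional solution space is spanned by the signed $2\times2$ minors of the coefficient matrix, namely $(d_1,d_2,d_3)=(\lambda_3-\lambda_2,\,\lambda_1-\lambda_3,\,\lambda_2-\lambda_1)$. With $u:=x_0-x$ one has $P_{x_0}(x)=\Phi(u)$, where the exponential sum $\Phi(u):=\sum_k d_k e^{\lambda_k u}$ satisfies $\Phi(0)=\Phi'(0)=0$ and $\Phi''(0)=(\lambda_2-\lambda_1)(\lambda_3-\lambda_1)(\lambda_3-\lambda_2)>0$; in particular $\Phi\not\equiv0$ and $u=0$ is a zero of $\Phi$ of exact multiplicity two. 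A nonzero exponential sum with three exponents has at most two real zeros counted with multiplicity (the Chebyshev-system property of $\{e^{\lambda_1 u},e^{\lambda_2 u},e^{\lambda_3 u}\}$; equivalently, Descartes' rule of signs for exponential sums), so the double zero at $u=0$ is its only zero. Since the coefficient of the highest-frequency term $e^{\lambda_3 u}$, namely $\lambda_2-\lambda_1$, is positive, $\Phi(u)\to+\infty$ as $u\to+\infty$; hence $\Phi$ cannot change sign, so $\Phi\ge0$ on $\R$ with equality only at $0$. Consequently $P_{x_0}\in V\subset C_\infty([0,\infty))$ obeys $P_{x_0}\ge0$ on $[0,\infty)$, $P_{x_0}(x_0)=0$, and $P_{x_0}(x)>0$ for all $x\neq x_0$. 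Having produced these peak functions, one may at this point simply invoke the general Korovkin theorem on locally compact spaces underlying \cite[Corollary~6.7]{Altomare}, whose hypothesis is exactly their existence together with the strict positivity of $f_{\lambda_1}$; I outline below the self-contained route.

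With the peak functions in hand I would run the classical Korovkin argument, taking care of the non-compactness of $[0,\infty)$. Let $\{L_n\}$ be positive linear operators on $C_\infty([0,\infty))$ with $\sup_n\|L_n\|<\infty$ and $\|L_n f_{\lambda_k}-f_{\lambda_k}\|_\infty\to0$ for $k=1,2,3$; then $\|L_n h-h\|_\infty\to0$ uniformly for $h$ in any norm-bounded subset of the finite-dimensional space $V$. Fix $f\in C_\infty([0,\infty))$ and $\eta>0$, and write $f=f_1+f_2$ with $\|f_2\|_\infty\le2\eta$ and $f_1\in C_\infty([0,\infty))$ satisfying $|f_1|\le Cf_{\lambda_1}$ on $[0,\infty)$ for some constant $C$ (take $f_1=f$ on $[0,R]$, with $R$ so large that $|f|<\eta$ on $[R,\infty)$, and let $f_1$ decay like $e^{-\lambda_1(\cdot-R)}$ beyond $R$). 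Since $\sup_n\|L_n\|<\infty$, it suffices to prove $\|L_n f_1-f_1\|_\infty\to0$. On the tail $x\ge R'$, positivity and $|f_1|\le Cf_{\lambda_1}$ give $|L_n f_1(x)|\le C\,L_n f_{\lambda_1}(x)$, and since $L_n f_{\lambda_1}\to f_{\lambda_1}$ uniformly with $f_{\lambda_1}(x)\to0$, both $\sup_{x\ge R'}|L_n f_1(x)|$ and $\sup_{x\ge R'}|f_1(x)|$ are made small by fixing $R'$ large and then $n$ large. On the compact part $x_0\in[0,R']$, put $w:=f_{\lambda_1}$ and $\tilde f:=f_1-\frac{f_1(x_0)}{w(x_0)}\,w\in C_\infty([0,\infty))$, which vanishes at $x_0$ and satisfies $|\tilde f|\le 2Cw$. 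For $\ve>0$, uniform continuity of $f_1$ supplies $\delta>0$, uniform over $x_0\in[0,R']$, with $|\tilde f|<\ve w$ on $\{|x-x_0|<\delta\}$, while on $\{|x-x_0|\ge\delta\}$ one has $P_{x_0}\ge m\,w$ for a constant $m>0$ taken uniform over $x_0\in[0,R']$ (the ratio $P_{x_0}/w$ is continuous, strictly positive off $x_0$, and has a positive limit at infinity, with uniform control over the compact parameter range). Hence $\pm\tilde f\le\ve w+c\,P_{x_0}$ on $[0,\infty)$ with $c$ uniform in $x_0$; applying the positive operator $L_n$ and evaluating at $x_0$ gives
\[
|L_n f_1(x_0)-f_1(x_0)|\le\ve\,L_n w(x_0)+c\,|L_n P_{x_0}(x_0)|+\frac{|f_1(x_0)|}{w(x_0)}\,|L_n w(x_0)-w(x_0)|.
\]
As $n\to\infty$ the last two terms tend to $0$ uniformly in $x_0\in[0,R']$, since $\{P_{x_0}:x_0\in[0,R']\}$ is bounded in $V$, $P_{x_0}(x_0)=0$, and $|f_1(x_0)|/w(x_0)\le C$, while $\sup_{x_0}L_n w(x_0)\to\sup_{x_0}w(x_0)\le1$; letting $\ve\searrow0$ yields $\sup_{x_0\in[0,R']}|L_n f_1(x_0)-f_1(x_0)|\to0$. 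Together with the tail estimate this gives $\|L_n f_1-f_1\|_\infty\to0$, hence $\limsup_n\|L_n f-f\|_\infty\le(\sup_n\|L_n\|+1)\cdot2\eta$; as $\eta>0$ is arbitrary, $\{f_{\lambda_1},f_{\lambda_2},f_{\lambda_3}\}$ is a Korovkin set.

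I expect the main obstacle to be the nonnegativity of the peak function $P_{x_0}$: the interpolation conditions $P_{x_0}(x_0)=P_{x_0}'(x_0)=0$ are elementary linear algebra, but $P_{x_0}\ge0$ rests on the zero-counting input that a nonzero exponential sum with three exponents has at most two zeros counted with multiplicity. The second delicate point is upgrading the pointwise Korovkin estimate to a uniform one on the non-compact half line: because $C_\infty([0,\infty))$ contains no constant function and a general element may decay arbitrarily slowly, one cannot dominate it by a multiple of any $f_{\lambda_k}$, which is why the preliminary splitting $f=f_1+f_2$ (with $f_1$ dominated by $Cf_{\lambda_1}$) and the use of $f_{\lambda_1}$ as a majorant at infinity are needed.
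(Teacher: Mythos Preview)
The paper does not supply its own proof of this proposition: it is stated in Appendix~B as a quotation of \cite[Corollary~6.7]{Altomare} and is used only as a black box (via Proposition~\ref{Korovkin-3}) to justify Proposition~\ref{Prop:conv-SM-weighted}. There is therefore nothing in the paper to compare your argument against.

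That said, your argument is correct and is precisely the standard route underlying the cited corollary. The construction of the peak function $P_{x_0}$ is right: the interpolation system yields $(d_1,d_2,d_3)=(\lambda_3-\lambda_2,\lambda_1-\lambda_3,\lambda_2-\lambda_1)$, the Vandermonde computation gives $\Phi''(0)=(\lambda_2-\lambda_1)(\lambda_3-\lambda_1)(\lambda_3-\lambda_2)>0$, and the Chebyshev-system zero count forces $\Phi\ge0$ with equality only at $0$. Your handling of the non-compactness is also sound: the splitting $f=f_1+f_2$ with $|f_1|\le C f_{\lambda_1}$ works (e.g.\ with $f_1=f$ on $[0,R]$ and $f_1(x)=f(R)e^{-\lambda_1(x-R)}$ beyond, one gets $C=\|f\|_\infty e^{\lambda_1 R}$), and in the sandwich step the key uniformity of $\delta$ follows cleanly once you note that $f_1/w$ extends to a uniformly continuous function on $[0,\infty]$, so that $|\tilde f/w|=|f_1/w-(f_1/w)(x_0)|<\ve$ for $|x-x_0|<\delta$ with $\delta$ independent of $x_0$. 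The uniform lower bound $P_{x_0}\ge m\,w$ on $\{|x-x_0|\ge\delta\}$ for $x_0\in[0,R']$ is justified by the continuity of $(x_0,x)\mapsto P_{x_0}(x)/w(x)$ together with the observation that its limit as $x\to\infty$ equals $(\lambda_3-\lambda_2)e^{\lambda_1 x_0}$, which is bounded away from zero for $x_0\in[0,R']$. In short, your proof is a faithful self-contained version of the Korovkin machinery on $C_\infty([0,\infty))$ that the paper simply cites.
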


Moreover, we obtain an easy way to find out 
Korovkin sets of $ C_\infty^w([0, \infty)) $ when 
the weight function $w$ is supposed to be bounded.

\begin{pr}
[cf.~{\cite[Proposition 6.16]{Altomare}}]
\label{Korovkin-3}
Let $ w \in C_b([0, \infty)) $. 
Then, every Korovkin set of $ C_\infty([0, \infty)) $
is also a Korovkin set of $C_\infty^w([0, \infty))$. 
In particular, the set $\{f_{\lambda_1}, f_{\lambda_2}, f_{\lambda_3}\}$ 
as in Proposition {\rm \ref{Korovkin-1}} is a Korovkin set of 
$C_\infty^w(E[0, \infty)$. 
\end{pr}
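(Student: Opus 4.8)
The plan is to exploit the isometric order isomorphism between the weighted space and the unweighted one induced by multiplication by $w$, and thereby reduce the assertion to a statement living entirely inside $C_\infty([0, \infty))$. First I would record that, since $w$ is bounded, the estimate $|w(x)f(x)| \le \|w\|_\infty|f(x)|$ shows $C_\infty([0, \infty)) \subseteq C_\infty^w([0, \infty))$; in particular the given set $\mathcal{K}$ is a genuine subset of $C_\infty^w([0, \infty))$, so it makes sense to ask whether it is a Korovkin set there. Writing $M_w f := wf$, the hypothesis $w(x)>0$ together with the very definition of the weighted norm makes $M_w : C_\infty^w([0, \infty)) \to C_\infty([0, \infty))$ a linear bijection with $\|M_w f\|_\infty = \|f\|_{\infty, w}$, whose inverse is $M_{1/w}h = h/w$; moreover both $M_w$ and $M_{1/w}$ preserve the respective positive cones because $w>0$ pointwise. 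Thus $M_w$ is an isometric lattice isomorphism.

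Next I would take an arbitrary sequence $\{L_n\}$ of positive linear operators on $C_\infty^w([0, \infty))$ with $\sup_n\|L_n\|<\infty$ and $\|L_ng-g\|_{\infty, w}\to 0$ for every $g\in\mathcal{K}$, and transport it through the isomorphism by setting $N_n := M_w L_n M_{1/w}$, an operator on $C_\infty([0, \infty))$. Because $M_w$ is an isometric order isomorphism, each $N_n$ is positive and $\sup_n\|N_n\|\le\sup_n\|L_n\|<\infty$. The elementary identity $\|L_nf-f\|_{\infty, w} = \|N_n(wf)-wf\|_\infty$, valid for all $f\in C_\infty^w([0, \infty))$, then translates the problem completely: the hypothesis becomes $\|N_n(wg)-wg\|_\infty\to 0$ for $g\in\mathcal{K}$, that is, $N_n\to I$ strongly on the set $w\mathcal{K}:=\{wg : g\in\mathcal{K}\}$, while the desired conclusion becomes $N_n\to I$ strongly on all of $C_\infty([0, \infty))$, since $wf$ runs over $C_\infty([0, \infty))$ as $f$ runs over $C_\infty^w([0, \infty))$.

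Consequently everything reduces to the following point, which I expect to be the main obstacle: if $\mathcal{K}$ is a Korovkin set of $C_\infty([0, \infty))$, then so is $w\mathcal{K}$. I would establish this through the functional-analytic characterization of Korovkin subsets of $C_\infty([0, \infty))$ in terms of positive Radon measures (see \cite{Altomare}): a subset $M$ is a Korovkin set precisely when, for each $x_0\in[0, \infty)$, the only finite positive measure $\mu$ with $\int h\,d\mu = h(x_0)$ for all $h\in M$ is $\mu=\delta_{x_0}$. Given such a $\mu$ for the set $w\mathcal{K}$, so that $\int wg\,d\mu = w(x_0)g(x_0)$ for all $g\in\mathcal{K}$, I would pass to the finite positive measure $\nu$ defined by $d\nu = w\,d\mu$ (finite because $w$ is bounded) and to its rescaling $\nu/w(x_0)$; these satisfy $\int g\,d(\nu/w(x_0)) = g(x_0)$ for all $g\in\mathcal{K}$, whence the Korovkin property of $\mathcal{K}$ forces $\nu/w(x_0)=\delta_{x_0}$, and dividing the density back by $w$ yields $\mu=\delta_{x_0}$. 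The delicate points here are to justify that $\nu/w(x_0)$ falls into the class of measures for which the characterization of $\mathcal{K}$ applies, and that the change of density by $w$ and $1/w$ preserves finiteness and positivity; this is exactly where the boundedness and the strict positivity of $w$ enter.

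Finally, with $w\mathcal{K}$ shown to be a Korovkin set of $C_\infty([0, \infty))$, applying its defining property to $\{N_n\}$ gives $N_n\to I$ strongly on $C_\infty([0, \infty))$, and undoing the translation yields $\|L_nf-f\|_{\infty, w}\to 0$ for every $f\in C_\infty^w([0, \infty))$; this is precisely the assertion that $\mathcal{K}$ is a Korovkin set of $C_\infty^w([0, \infty))$. The last claim of the proposition is then immediate: by Proposition \ref{Korovkin-1} the set $\{f_{\lambda_1}, f_{\lambda_2}, f_{\lambda_3}\}$ is a Korovkin set of $C_\infty([0, \infty))$, so the general statement applies to it.
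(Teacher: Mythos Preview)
The paper does not supply its own proof of this proposition: it is stated in Appendix~B with the reference ``cf.~\cite[Proposition~6.16]{Altomare}'' and no argument is given. So there is nothing in the paper to compare your attempt against directly.

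On the substance, your strategy is the natural one and matches in spirit the treatment in \cite{Altomare}: transport the problem from $C_\infty^w([0,\infty))$ to $C_\infty([0,\infty))$ via the isometric lattice isomorphism $M_w$, and thereby reduce to a Korovkin question in the unweighted space. The reduction to showing that $w\mathcal{K}$ is a Korovkin set of $C_\infty([0,\infty))$ is correct, and you rightly flag the delicate step. One point to watch is the precise form of the measure--theoretic characterization you invoke: in $C_0(X)$ (no unit), the standard criterion in \cite{Altomare} singles out, for each $x_0$, the \emph{contractive} positive Radon measures $\mu$ (i.e.\ $\|\mu\|\le 1$) satisfying $\int h\,d\mu=h(x_0)$ for all $h$ in the test set. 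Your passage from $\mu$ to $\nu/w(x_0)$ must therefore check that $\|\nu\|/w(x_0)=\int w\,d\mu/w(x_0)\le 1$, which does not follow from $\|\mu\|\le 1$ and the boundedness of $w$ alone. This gap is genuine and is precisely where the argument in \cite{Altomare} uses a slightly different route (working with the Korovkin closure rather than the raw measure criterion); you should either consult that proof or argue instead that the conjugated operators $N_n$ converge on $\mathcal K$ itself by a direct estimate, which avoids introducing $w\mathcal K$ altogether.
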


\end{appendix}



\end{document}